\providecommand{\U}[1]{\protect\rule{.1in}{.1in}}
\newtheorem{theorem}{Theorem}[section]
\newtheorem{corollary}[theorem]{Corollary}
\newtheorem{lemma}[theorem]{Lemma}
\newtheorem{proposition}[theorem]{Proposition}
\theoremstyle{definition}
\theoremstyle{remark}
\newtheorem{example}[theorem]{Example}
\newtheorem{remark}[theorem]{Remark}
\numberwithin{equation}{section}
\begin{document}

\title{Spectral parameter power series representation for solutions of linear system of two first order differential equations}
\author{Nelson Guti\'{e}rrez Jim\'{e}nez$^a$ and Sergii M. Torba$^b$\\
\\{\small $~^a$ Instituto de Matem\'{a}ticas, Facultad de Ciencias Exactas y Naturales, Universidad de Antioquia,}\\{\small Calle 67 No.~53--108, Medell\'{i}n, COLOMBIA}
\\{\small $~^b$ Departamento de Matem\'{a}ticas, CINVESTAV del IPN, Unidad
Quer\'{e}taro, }\\{\small Libramiento Norponiente No. 2000, Fracc. Real de Juriquilla,
Quer\'{e}taro, Qro. 76230, MEXICO}\\{\small e-mails:
nelson.gutierrez@udea.edu.co, storba@math.cinvestav.edu.mx}}
\maketitle

\begin{abstract}
A representation in the form of spectral parameter power series (SPPS) is given for a general solution of a one dimension Dirac system containing arbitrary matrix coefficient at the spectral parameter,
\begin{equation*}
B \frac{dY}{dx} + P(x)Y = \lambda R(x)Y,\eqno{(\ast)}
\end{equation*}
where $Y=(y_1,y_2)^T$  is the unknown vector-function, $\lambda$ is the spectral parameter, $B = \begin{pmatrix}
    0 & 1 \\
    -1 & 0
\end{pmatrix}$,
and $P$ is a symmetric $2\times 2$ matrix, $R$ is an arbitrary $2\times 2$ matrix whose entries are integrable complex-valued functions. The coefficient functions in these series are obtained by recursively iterating a simple
integration process, beginning with a non-vanishing solution for one particular $\lambda = \lambda_0$. The existence of such solution is shown.

For a general linear system of two first order differential equations
\begin{equation*}
    P(x)\frac{dY}{dx}+Q(x)Y = \lambda R(x)Y,\qquad x\in [a,b],
\end{equation*}
where $P$, $Q$, $R$ are $2\times 2$ matrices whose entries are integrable complex-valued functions, $P$ being invertible for every $x$, a transformation reducing it to a system ($\ast$) is shown.

The general scheme of application of the SPPS representation to the solution of initial value and spectral problems as well as numerical illustrations are provided.
\end{abstract}

\section{Introduction}
The spectral parameter power series (SPPS) representation for solutions of second-order linear differential equations  \cite{KrCV08}, \cite{KrPorter2010} has proven to be an efficient tool for solving (analytically and numerically) and studying a variety of problems, see the review \cite{KKRosu} and recent papers \cite{BR2016}, \cite{BR2017}, \cite{BRM2019}, \cite{CKT2015},  \cite{LO2017}, \cite{RH2017}. The SPPS method starts with a non-vanishing solution of the equation for one fixed value of the spectral parameter and by performing a series of recursive integrations produces coefficients of the Taylor series of the solution with respect to the spectral parameter. The procedure can be easily and efficiently implemented numerically allowing one to solve a variety of spectral problems with remarkable accuracy.

Later the SPPS representation was extended to solutions of singular second order differential equations \cite{CKT2013}, of equations with polynomial dependence on the spectral parameter \cite{KTV} and recently of linear differential equations of arbitrary order \cite{KrPoTo2018}. However the SPPS representation for the solutions of linear systems of differential equations was constructed only for Zakharov-Shabat system  and a particular case of one dimensional Dirac system \cite{KTV}, in both cases by transforming the system into a certain Sturm-Liouville equation. Even though the general one dimensional Dirac system can be transformed into a Sturm-Liouville equation with potential polynomially dependent on the spectral parameter, see \cite{AAS2012}, and the result from \cite{KTV} may be applied, we are not aware of such transformation for more complicated right-hand sides of the system. We opted for a different approach which allowed us to deal with arbitrary linear systems of two first order differential equations.

It is worth adding that the SPPS representation of solutions of one dimension Schr\"{o}dinger equations allowed us to establish the mapping theorem for transmutation operators \cite{CKrTr2012} which finally has led us to the development of two new methods, analytic approximation of transmutation operators (AATO) \cite{KT AnalyticApprox} and Neumann series of Bessel functions (NSBF) representation for the solutions \cite{KNT2017} allowing one, in particular, to obtain hundreds of approximate eigenvalues with non deteriorating accuracy. The representation proposed in this paper opens possibility to extend the AATO and NSBF methods onto one-dimensional Dirac systems.

The paper is organized as follows. In Section \ref{Section DiracSPPS} we consider a one-dimensional Dirac system whose right-hand side may contain arbitrary matrix-function coefficient at the spectral parameter. In Subsection \ref{SubSectFormalPowers} we introduce the formal powers starting from one non-vanishing particular solution corresponding to zero value of the spectral parameter. In Subsection \ref{SubSectDiracSPPS} we show how the general solution of the system can be written in the terms of these formal powers (Theorem \ref{Thm DiracSPPS}). In Subsection \ref{SubSectPartSol} we show the existence of a non-vanishing particular solution. In Subsection \ref{SubSectSpectralShift} we describe the spectral shift procedure. In Subsection \ref{SubSectDiscontinuous} we extend the results onto discontinuos coefficients. In Section \ref{SectionSPPSsystem} we show that by simple transformation the general linear system of two first order differential equations can be reduced to the form covered in Section \ref{Section DiracSPPS}. In Section \ref{Section Numerics} we propose the general scheme for application of the SPPS representation to the solution of initial value and spectral problems, present numerical results for a particular Dirac system,
show how a Sturm-Liouville spectral problem can be reformulated as a spectral problem for a Dirac system and discuss possible advantages of such problem reformulation.

\section{The spectral parameter power series representation for solutions of generalized Dirac systems}\label{Section DiracSPPS}
Consider the following system
\begin{equation}\label{GenDirac}
    \begin{cases}
        v'+p_1(x) u + q(x) v = \lambda \bigl( r_{11}(x) u + r_{12}(x) v\bigr),\\
        -u'+q(x) u + p_2(x) v = \lambda \bigl( r_{21}(x) u + r_{22}(x) v\bigr),
    \end{cases}
\end{equation}
or in the matrix form,
\begin{equation}\label{GenDiracMatrix}
B \frac{dY}{dx} + P(x)Y = \lambda R(x)Y,\qquad Y(x) = \begin{pmatrix}
    u(x) \\
    v(x) \\
\end{pmatrix},
\end{equation}
where
\[
B = \begin{pmatrix}
    0 & 1 \\
    -1 & 0 \\
\end{pmatrix},\qquad P(x) =
\begin{pmatrix}
    p_{1}(x) & q(x) \\
    q(x) & p_{2}(x) \\
\end{pmatrix},\qquad R(x)=
\begin{pmatrix}
    r_{11}(x) & r_{12}(x) \\
    r_{21}(x) & r_{22}(x) \\
\end{pmatrix},
\]
$p_i, q, r_{ij}\in C[a,b]$, $i,j\in\{1,2\}$ are complex-valued functions of the real variable $x$, and $\lambda$ is an arbitrary complex constant. In the case when
$R(x) = \begin{pmatrix}
    1 & 0 \\
    0 & 1 \\
\end{pmatrix}$ the system \eqref{GenDiracMatrix} is known as one-dimensional Dirac system \cite{LevitanSargsjan}, and in the case when $p_1=\bar p_2$, $q=0$ and $\begin{pmatrix}
    r_{11} & r_{12} \\
    r_{21} & r_{22} \\
\end{pmatrix} = \begin{pmatrix}
    0 & i \\
    i & 0 \\
\end{pmatrix}$ the system \eqref{GenDirac} is known as Zakharov-Shabat system \cite{Ablowitz y Segur, Zakharov-Shabat}.

\subsection{A system of generalized formal powers}\label{SubSectFormalPowers}
Suppose that the homogeneous system
\begin{equation}\label{HomDirac}
B \frac{dY}{dx} + P(x)Y = 0
\end{equation}
possesses a solution $Y=(f,g)^T$ such that both functions $f$ and $g$ are non-vanishing on $[a,b]$. From now on we will call such solution as non-vanishing solution of the homogeneous system \eqref{HomDirac}. Let $x_0$ be a point from the segment $[a,b]$. Consider the following systems of functions defined by the recursive relations
\begin{align}
    X^{(0)}(x) &= f(x_0) g(x_0) \int_{x_0}^x \frac{p_2(s)}{f^2(s)}\,ds, \label{X0}\\
    Y^{(0)}(x) &= 1 + f(x_0) g(x_0) \int_{x_0}^x \frac{p_1(s)}{g^2(s)}\,ds, \label{Y0}\\
    Z^{(n)}(x) &= \int_{x_0}^x \Bigl( X^{(n)}(s) \bigl(f^2(s) r_{11}(s) + g^2(s) r_{21}(s)\bigr) + Y^{(n)}(s) \bigl(f^2(s) r_{12}(s) + g^2(s) r_{22}(s)\bigr)\Bigr)\,ds, \label{Zn}\\
    X^{(n+1)}(x) &= (n+1)\int_{x_0}^x \Bigl( -r_{21}(s) X^{(n)}(s) - r_{22}(s)\frac{g(s)}{f(s)} Y^{(n)}(s)+\frac{p_2(s)}{f^2(s)}Z^{(n)}(s)\Bigr)\,ds,\label{Xn}\\
    Y^{(n+1)}(x) &= (n+1)\int_{x_0}^x \Bigl( r_{11}(s)\frac{f(s)}{g(s)} X^{(n)}(s) + r_{12}(s) Y^{(n)}(s)+\frac{p_1(s)}{g^2(s)}Z^{(n)}(s)\Bigr)\,ds,\quad n\ge 0.\label{Yn}
\end{align}
Similarly we use as the initial functions
\begin{align}
    \widetilde X^{(0)}(x) &= 1-f(x_0) g(x_0) \int_{x_0}^x \frac{p_2(s)}{f^2(s)}\,ds, \label{Xt0}\\
    \widetilde Y^{(0)}(x) &= -f(x_0) g(x_0) \int_{x_0}^x \frac{p_1(s)}{g^2(s)}\,ds \label{Yt0}
\end{align}
and define functions $\widetilde Z^{(n)}$, $\widetilde X^{(n+1)}$ and $\widetilde Y^{(n+1)}$, $n\ge 0$ using the same formulas \eqref{Zn}--\eqref{Yn} changing correspondingly all the functions $X^{(n)}$, $Y^{(n)}$ and $Z^{(n)}$ by $\widetilde X^{(n)}$, $\widetilde Y^{(n)}$ and $\widetilde Z^{(n)}$.

\begin{example}\label{ExampleSL}
Following \cite{KrPorter2010}, let us consider a Sturm-Liouville equation
\begin{equation}\label{SLeq}
\bigl(p(x)u'\bigr)'+q(x)u = \omega^2 r(x) u,
\end{equation}
where $p\in C^1[a,b]$, $q,r\in C[a,b]$ are complex-valued functions such that $p$ does not vanish on $[a,b]$. Suppose that a function $u_0$ is a non-vanishing solution corresponding to $\omega=0$. The following systems of functions were introduced in \cite{KrPorter2010}
\begin{align*}
\mathcal{X}^{(0)}(x)&\equiv \widetilde{\mathcal{X}}^{(0)} \equiv 1,\\
\displaybreak[2]
\mathcal{X}^{(n)}(x)&=
\begin{cases}
n\int_{x_0}^{x}\mathcal{X}^{(n-1)}(s)u_0^{2}(s)r(s)\,\mathrm{d}s, &n\text{ even}\\
n\int_{x_0}^{x}\mathcal{X}^{(n-1)}(s)\frac{1}{u_0^{2}(s)p(s)}\,\mathrm{d}s, &n\text{ odd}
\end{cases}\\
\displaybreak[2]
\widetilde{\mathcal{X}}^{(n)}(x)&=
\begin{cases}
n\int_{x_0}^{x}\widetilde{\mathcal{X}}^{(n-1)}(s)u_0^{2}(s)r(s)\,\mathrm{d}s, & n\text{ odd}\\
n\int_{x_0}^{x}\widetilde{\mathcal{X}}^{(n-1)}(s)\frac{1}{u_0^{2}(s)p(s)}\,\mathrm{d}s, & n\text{ even}
\end{cases}
\end{align*}
and it was proved that the general solution of \eqref{SLeq} has the form
\begin{equation}\label{Eq SPPSKravPorter}
u = c_1 u_0\sum_{k=0}^\infty \frac{\omega^{2k} \widetilde{\mathcal{X}}^{(2k)}}{(2k)!}+
c_2 u_0\sum_{k=0}^\infty \frac{\omega^{2k} \mathcal{X}^{(2k+1)}}{(2k+1)!}.
\end{equation}
Let $\omega \ne 0$. Consider a function $v$ defined by
\begin{equation}\label{EqSLtoDirac}
\omega \frac{v}{p} = u_0\frac{d}{dx}\left(\frac u{u_0}\right) = u'-\frac{u_0'}{u_0}u.
\end{equation}
Then equation \eqref{SLeq} is equivalent to the system
\begin{equation}\label{SLequivDirac}
\begin{cases}
v'+\frac{u_0'}{u_0}v = \omega ru,\\
-u'+\frac{u_0'}{u_0}u = -\omega \frac{1}{p}v,
\end{cases}
\end{equation}
a particular case of \eqref{GenDirac} having $q=u_0'/u_0$ and $p_1=p_2=0$. A non-vanishing particular solution of \eqref{SLequivDirac} corresponding to $\omega =0$ can be taken in the form $(u,v)^T = (u_0, 1/u_0)^T$ and one can easily verify that the formal powers defined by \eqref{X0}--\eqref{Yt0} satisfy for all $n\ge 0$
\begin{gather*}
X^{(2n)}=Y^{(2n+1)} \equiv 0,\qquad X^{(2n-1)} = \mathcal{X}^{(2n-1)},\qquad Y^{(2n)} =  \mathcal{X}^{(2n)},\\
\widetilde X^{(2n+1)}=\widetilde Y^{(2n)} \equiv 0,\qquad \widetilde X^{(2n)} =  \widetilde {\mathcal{X}}^{(2n)},\qquad \widetilde Y^{(2n+1)} = \widetilde {\mathcal{X}}^{(2n+1)}.
\end{gather*}
\end{example}

\subsection{The SPPS representation}\label{SubSectDiracSPPS}
The following theorem gives the general solution of the system \eqref{GenDiracMatrix}.
\begin{theorem}\label{Thm DiracSPPS}
Suppose that the homogeneous system \eqref{HomDirac} possesses a solution $Y_0=(f,g)^T$ such that both functions $f$ and $g$ are non-vanishing on $[a,b]$. Then a general solution of the system \eqref{GenDiracMatrix} has the form
\begin{equation}\label{DiracSPPSgen}
Y = c_1 Y_1 + c_2 Y_2 = c_1\begin{pmatrix}
    u_1 \\
    v_1 \\
\end{pmatrix} + c_2\begin{pmatrix}
    u_2 \\
    v_2 \\
\end{pmatrix},
\end{equation}
where $c_1$ and $c_2$ are arbitrary complex constants and
\begin{equation}\label{DiracSPPS}
    \begin{pmatrix}
    u_1 \\
    v_1 \\
\end{pmatrix} = \sum_{n=0}^\infty \frac{\lambda^n}{n!} \begin{pmatrix}
    f \widetilde X^{(n)} \\
    g \widetilde Y^{(n)} \\
\end{pmatrix},\qquad
\begin{pmatrix}
    u_2 \\
    v_2 \\
\end{pmatrix} = \sum_{n=0}^\infty \frac{\lambda^n}{n!} \begin{pmatrix}
    f X^{(n)} \\
    g Y^{(n)} \\
\end{pmatrix}.
\end{equation}
Here the formal powers $X^{(n)}$, $Y^{(n)}$, $\widetilde X^{(n)}$, $\widetilde Y^{(n)}$ are constructed starting with the particular solution $Y_0$ according to \eqref{X0}--\eqref{Yt0}.
Both series converge uniformly on $[a,b]$. The solutions $Y_{1}$ and $Y_{2}$ satisfy the following initial conditions:
\begin{equation}\label{SPPS IC}
Y_{1}(x_{0})=\begin{pmatrix}f(x_0)\\
                                           0
                             \end{pmatrix},
\qquad
Y_{2}(x_{0})=\begin{pmatrix}0\\
                                              g(x_0)
                               \end{pmatrix}.														 
\end{equation}
\end{theorem}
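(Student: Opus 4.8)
The plan is to write the two candidate solutions in the form $Y_{2}=(f\mathcal U,\,g\mathcal V)^{T}$ and $Y_{1}=(f\widetilde{\mathcal U},\,g\widetilde{\mathcal V})^{T}$, where
\[
\mathcal U=\sum_{n=0}^{\infty}\frac{\lambda^{n}}{n!}X^{(n)},\qquad \mathcal V=\sum_{n=0}^{\infty}\frac{\lambda^{n}}{n!}Y^{(n)},\qquad \mathcal Z=\sum_{n=0}^{\infty}\frac{\lambda^{n}}{n!}Z^{(n)},
\]
and $\widetilde{\mathcal U},\widetilde{\mathcal V},\widetilde{\mathcal Z}$ are the analogous sums built from the tilde powers. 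The first step is convergence. Put $a_{n}=X^{(n)}/n!$, $b_{n}=Y^{(n)}/n!$, $c_{n}=Z^{(n)}/n!$; the factor $n+1$ in \eqref{Xn}--\eqref{Yn} cancels one unit of the factorial, so $a_{n+1},b_{n+1}$ are obtained from $a_{n},b_{n},c_{n}$ by a single integration and $c_{n}$ from $a_{n},b_{n}$ by a single integration. Since $f,g$ are bounded away from $0$ and $p_{i},q,r_{ij}$ are bounded on the compact segment $[a,b]$, a single constant $C$ dominates every coefficient occurring in these recursions, and a routine induction gives $|a_{n}(x)|+|b_{n}(x)|\le K\,(C|x-x_{0}|)^{n}/n!$, with $|c_{n}|$ bounded in the same way. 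Hence all three series, together with the series obtained by differentiating them term by term, converge uniformly on $[a,b]$ for every $\lambda\in\mathbb C$; in particular the series \eqref{DiracSPPS} converge uniformly and $\mathcal U,\mathcal V,\mathcal Z$ are $C^{1}$ with derivatives computed term by term. The tilde series are treated identically.

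Differentiating \eqref{X0}--\eqref{Yn} term by term and re-indexing the sums turns the recursions into a closed first-order linear system for $\mathcal U,\mathcal V,\mathcal Z$ with the initial data $\mathcal U(x_{0})=0$, $\mathcal V(x_{0})=1$, $\mathcal Z(x_{0})=0$. The crux of the proof is the identity
\[
fg\,(\mathcal V-\mathcal U)=f(x_{0})g(x_{0})+\lambda\mathcal Z .
\]
To prove it, set $\mathcal D:=fg(\mathcal V-\mathcal U)-f(x_{0})g(x_{0})-\lambda\mathcal Z$, so $\mathcal D(x_{0})=0$, and differentiate: substituting the equations for $\mathcal U',\mathcal V',\mathcal Z'$ and using $(fg)'=p_{2}g^{2}-p_{1}f^{2}$ (which is exactly \eqref{HomDirac} written out for $Y_{0}=(f,g)^{T}$), the terms carrying the entries of $R$ cancel against $\lambda\mathcal Z'$ and one is left with $\mathcal D'=\bigl(p_{2}g/f-p_{1}f/g\bigr)\mathcal D$. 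A homogeneous scalar linear first-order equation with zero initial value has only the trivial solution, so $\mathcal D\equiv0$. This is precisely where the non-vanishing of $f$ and $g$ is used: it makes division by $f$, $g$, $fg$ legitimate and the coefficient $p_{2}g/f-p_{1}f/g$ continuous on all of $[a,b]$.

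With the identity in hand, the verification is routine. Substituting $u=f\mathcal U$, $v=g\mathcal V$ into the two scalar equations \eqref{GenDirac} and using $f'=qf+p_{2}g$, $g'=-p_{1}f-qg$, the first equation collapses to $p_{1}\mathcal D=0$ and the second to $p_{2}\mathcal D=0$, both now true; hence $Y_{2}$ solves \eqref{GenDiracMatrix}. The same computation applies to $Y_{1}$, the only change being that \eqref{Xt0}--\eqref{Yt0} give $\widetilde{\mathcal U}(x_{0})=1$, $\widetilde{\mathcal V}(x_{0})=0$, so that the auxiliary identity takes the form $fg(\widetilde{\mathcal V}-\widetilde{\mathcal U})=-f(x_{0})g(x_{0})+\lambda\widetilde{\mathcal Z}$. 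Evaluating the series at $x=x_{0}$, where every integral vanishes, gives $X^{(n)}(x_{0})=\widetilde Y^{(n)}(x_{0})=0$ for all $n$, $Y^{(n)}(x_{0})=\widetilde X^{(n)}(x_{0})=0$ for $n\ge1$, and $Y^{(0)}(x_{0})=\widetilde X^{(0)}(x_{0})=1$, which is \eqref{SPPS IC}. Finally, since $f(x_{0})\ne0\ne g(x_{0})$ the vectors $Y_{1}(x_{0})=(f(x_{0}),0)^{T}$ and $Y_{2}(x_{0})=(0,g(x_{0}))^{T}$ are linearly independent; as the solution space of the linear system \eqref{GenDiracMatrix} with continuous coefficients is two-dimensional and a solution is determined by its value at $x_{0}$, $\{Y_{1},Y_{2}\}$ is a fundamental system, so every solution is a unique combination $c_{1}Y_{1}+c_{2}Y_{2}$, which is \eqref{DiracSPPSgen}.

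The main obstacle should be the computation behind the identity $fg(\mathcal V-\mathcal U)=f(x_{0})g(x_{0})+\lambda\mathcal Z$: this is where the precise weights $1/f^{2},1/g^{2},g/f,f/g$ in \eqref{X0}--\eqref{Yn} and the coefficients in \eqref{Zn} must interlock so that, after termwise differentiation, everything except the term proportional to $\mathcal D$ cancels. It is also the reason the uniform-convergence estimate has to be secured first, so that the formal operations on the series — termwise differentiation and shifting of the summation index — are justified.
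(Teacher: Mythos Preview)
Your argument is correct and follows a genuinely different route. The paper first proves three auxiliary results: Lemma~\ref{Lemma NonHom Dirac} gives an explicit right inverse for $B\frac{d}{dx}+P$ and, in the course of its proof, derives $gu-fv=-\int_{x_0}^x(fh_1+gh_2)\,ds$ from the identity $\bigl(\frac{1}{fg}\bigr)'=-\bigl(\frac{p_2}{f^2}-\frac{p_1}{g^2}\bigr)$; Lemma~\ref{Lemma Hom Dirac} gives the Abel-type second solution of \eqref{HomDirac}; Lemma~\ref{Lemma FP estimates} and Corollary~\ref{Corr FP estimates} supply sharp uniform bounds. From the first two lemmas the paper reads off the term-by-term identity $\bigl(B\frac{d}{dx}+P\bigr)(fX^{(n)},gY^{(n)})^{T}=nR\,(fX^{(n-1)},gY^{(n-1)})^{T}$ and then sums. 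You instead work directly with the summed series, reduce both scalar equations of \eqref{GenDirac} to the single relation $fg(\mathcal V-\mathcal U)=f(x_0)g(x_0)+\lambda\mathcal Z$, and establish it by showing that $\mathcal D$ solves a homogeneous first-order scalar ODE with zero initial value. The algebraic heart is literally the same: $(fg)'=p_2g^2-p_1f^2$ is what drives the integration by parts in Lemma~\ref{Lemma NonHom Dirac} and is what makes the non-$\mathcal D$ terms cancel in your computation of $\mathcal D'$, so your identity $\mathcal D\equiv 0$ is the summed form of the paper's $gu-fv=-\int(fh_1+gh_2)$. Your packaging is shorter and self-contained; the paper's buys two lemmas of independent interest (a variation-of-parameters formula and a second-solution formula) and the sharper explicit estimates reused in Section~\ref{Section Numerics}. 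One caution worth recording: the claimed cancellation of the $R$-terms against $\lambda\mathcal Z'$ needs the integrand of $Z^{(n)}$ to equal $fh_1+gh_2=X^{(n)}(f^2r_{11}+fg\,r_{21})+Y^{(n)}(fg\,r_{12}+g^2r_{22})$; the $g^{2}r_{21}$ and $f^{2}r_{12}$ printed in \eqref{Zn} appear to be a misprint, and the paper's own derivation of the termwise recursion from Lemma~\ref{Lemma NonHom Dirac} requires the same correction.
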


The proof of Theorem \ref{Thm DiracSPPS} requires several lemmas. In the first lemma we consider the nonhomogeneous system obtained from the left-hand side of \eqref{GenDirac} and construct a right-inverse operator for this system.

\begin{lemma}\label{Lemma NonHom Dirac}
Under the conditions of Theorem \ref{Thm DiracSPPS} the solution of the nonhomogeneous system
\begin{equation}\label{NonHom Dirac}
    \begin{cases}
        v'+p_1 u + q v = h_1,\\
        -u'+q u + p_2 v = h_2,
    \end{cases}
\end{equation}
with the initial conditions
\begin{equation}\label{NonHom Dirac IC}
    u(x_0) = v(x_0) = 0,
\end{equation}
where $h_{1,2}\in C[a,b]$ and $x_0\in [a,b]$, can be written in the form
\begin{equation}\label{NonHom Dirac u}
    u(x) = f(x) \int_{x_0}^x \biggl(-\frac{h_2(t)}{f(t)} + \frac{p_2(t)}{f^2(t)}\int_{x_0}^t \bigl( f(s) h_1(s) + g(s) h_2(s)\bigr)\,ds\biggr)dt
\end{equation}
and
\begin{equation}\label{NonHom Dirac v}
    v(x) = g(x) \int_{x_0}^x \biggl(\frac{h_1(t)}{g(t)} + \frac{p_1(t)}{g^2(t)}\int_{x_0}^t \bigl( f(s) h_1(s) + g(s) h_2(s)\bigr)\,ds\biggr)dt.
\end{equation}
\end{lemma}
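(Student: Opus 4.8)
The plan is to verify directly that the formulas \eqref{NonHom Dirac u}--\eqref{NonHom Dirac v} define functions satisfying the system \eqref{NonHom Dirac} together with the initial conditions \eqref{NonHom Dirac IC}, and then to invoke uniqueness for linear first-order systems to conclude that this is \emph{the} solution. First I would observe that the particular solution $Y_0=(f,g)^T$ of the homogeneous system \eqref{HomDirac} provides the two scalar identities $g'+p_1 f+qg=0$ and $-f'+qf+p_2 g=0$, which will be used repeatedly to eliminate the terms containing $q$ and to relate $f'$, $g'$ to the coefficients $p_1$, $p_2$, $q$. Note also that the hypothesis that $f$ and $g$ are non-vanishing on $[a,b]$ guarantees that all the integrands in \eqref{NonHom Dirac u}--\eqref{NonHom Dirac v} are continuous, so $u$ and $v$ are well defined and continuously differentiable; evaluating at $x=x_0$ the outer integral vanishes, giving \eqref{NonHom Dirac IC} immediately.

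The core computation is differentiation. Writing $I(t):=\int_{x_0}^t\bigl(f(s)h_1(s)+g(s)h_2(s)\bigr)\,ds$ for the common inner integral, we have
\[
u(x)=f(x)\int_{x_0}^x\Bigl(-\tfrac{h_2(t)}{f(t)}+\tfrac{p_2(t)}{f^2(t)}I(t)\Bigr)dt,\qquad
v(x)=g(x)\int_{x_0}^x\Bigl(\tfrac{h_1(t)}{g(t)}+\tfrac{p_1(t)}{g^2(t)}I(t)\Bigr)dt.
\]
Differentiating $u$ by the product rule gives $u'=f'\cdot(\text{outer integral})+f\cdot\bigl(-\tfrac{h_2}{f}+\tfrac{p_2}{f^2}I\bigr)$; using $u/f=\text{outer integral}$ and $f'=qf+p_2 g$ (from the homogeneous relation) one rewrites $u'=(qf+p_2 g)\,u/f-h_2+\tfrac{p_2}{f}I = qu-h_2+\tfrac{p_2}{f}\bigl(gu/f\cdot f+I\bigr)$; a parallel computation gives $v'=(- p_1 f - qg)\,v/g+h_1+\tfrac{p_1}{g}I=-qv+h_1-\tfrac{p_1}{g}\bigl(fv/g\cdot g - I\bigr)$ — I would then check that the combination $\tfrac{p_2}{f}\bigl(fu/f + I\bigr)$ appearing in $-u'+qu+p_2v$ and the analogous combination in $v'+p_1u+qv$ collapse correctly. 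The key structural fact that makes everything cancel is that $fu'\!\!\restriction$-type cross terms assemble into $p_2\bigl(\text{something}\bigr)+\tfrac{p_2}{f}I$ and $\tfrac{d}{dx}I = fh_1+gh_2$, so that $p_2 v + (\text{the }I\text{-term in }u')$ and $p_1 u + (\text{the }I\text{-term in }v')$ each reduce to expressions in which the $I$ terms and the $u,v$ terms cancel pairwise, leaving exactly $h_2$ and $h_1$ respectively. I would carry out these two verifications (one for each equation of \eqref{NonHom Dirac}) in parallel, exploiting the symmetry $u\leftrightarrow v$, $f\leftrightarrow g$, $p_1\leftrightarrow p_2$, $h_1\leftrightarrow -h_2$ (up to signs coming from $B$) to halve the work.

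The main obstacle I anticipate is purely bookkeeping: keeping track of the signs introduced by $B$ and making sure the homogeneous relations for $f$ and $g$ are applied with the correct sign so that the $q$-terms and the double-integral $I$-terms cancel exactly rather than leaving a residual. There is no conceptual difficulty — once the algebra is organized around the substitutions $u/f=\int(\cdots)$, $v/g=\int(\cdots)$, and the two homogeneous identities, the cancellations are forced. Finally, since \eqref{NonHom Dirac} is a linear first-order system with continuous coefficients (here $B$ is invertible, so the system can be put in standard form $Y'=\dots$), the solution of the initial value problem \eqref{NonHom Dirac}--\eqref{NonHom Dirac IC} is unique, and hence the pair $(u,v)$ we have exhibited is the solution, completing the proof.
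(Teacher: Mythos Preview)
Your approach---direct verification that \eqref{NonHom Dirac u}--\eqref{NonHom Dirac v} satisfy the system and the initial conditions, followed by uniqueness for the linear initial value problem---is exactly the logically essential part of the paper's own proof. (The paper first \emph{derives} the formulas by passing to second-order Sturm--Liouville equations and using P\'olya's factorization, but only under the extra hypotheses $p_{1,2},q,h_{1,2}\in C^1$ and $p_{1,2}$ non-vanishing; it then does the direct check, as you propose, to cover the general case.) So the route is right.

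There is, however, one step that is not the pure bookkeeping you describe. After substituting $f'=qf+p_2g$ and $g'=-p_1f-qg$, your own computation shows that each equation of \eqref{NonHom Dirac} reduces to verifying the single identity
\[
g(x)u(x)-f(x)v(x)=-I(x),\qquad I(x)=\int_{x_0}^x\bigl(f(s)h_1(s)+g(s)h_2(s)\bigr)\,ds.
\]
This is not automatic: from the formulas one has
\[
gu-fv=fg\int_{x_0}^x\Bigl(-\tfrac{h_1}{g}-\tfrac{h_2}{f}+\bigl(\tfrac{p_2}{f^2}-\tfrac{p_1}{g^2}\bigr)I\Bigr)dt,
\]
and to collapse this to $-I$ one needs the identity
\[
\frac{p_2}{f^2}-\frac{p_1}{g^2}=-\Bigl(\frac{1}{fg}\Bigr)',
\]
which follows from the homogeneous relations for $f,g$, together with an integration by parts. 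This is precisely the paper's key step (equations \eqref{DiracEq2}--\eqref{DiracEq3}). Once you insert this identity the rest really is routine, but your claim that ``the cancellations are forced'' and that the obstacle is ``purely bookkeeping'' undersells the one structural fact you still have to spot; make it explicit in your write-up.
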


\begin{proof}
First we show how one can obtain the formulas \eqref{NonHom Dirac u} and \eqref{NonHom Dirac v} in the case when $h_{1,2}$, $q$, $p_{1,2}\in C^1[a,b]$ and the functions $p_1$ and $p_2$ are non-vanishing on $[a,b]$.

By differentiation and simple algebraic transformations one can verify that if the functions $u$ and $v$ satisfy \eqref{NonHom Dirac}, then these functions satisfy the following nonhomogeneous Sturm-Liouville equations
\begin{align}
\left(\frac{u'}{p_2}\right)'+\left(p_1-\left(\frac{q}{p_2}\right)'-\frac{q^2}{p_2}\right) u&= h_1 - \frac q{p_2}h_2 - \left(\frac{h_2}{p_2}\right)',\label{NonHom SL1}\\
\left(\frac{v'}{p_1}\right)'+\left(p_2+\left(\frac{q}{p_1}\right)'-\frac{q^2}{p_1}\right) v& = h_2 - \frac q{p_1}h_1 + \left(\frac{h_1}{p_1}\right)'.\label{NonHom SL2}
\end{align}

Note that since the pair of functions $f$ and $g$ is a solution of the homogeneous system \eqref{HomDirac}, the function $f$ is a solution of homogeneous part of equation \eqref{NonHom SL1}, i.e., $\left(\frac{f'}{p_2}\right)'+\Bigl(p_1-\bigl(\frac{q}{p_2}\bigr)'-\frac{q^2}{p_2}\Bigr) f=0$, and $g$ is a solution of the equation $\left(\frac{g'}{p_1}\right)'+\Bigl(p_2+\bigl(\frac{q}{p_1}\bigr)'-\frac{q^2}{p_1}\Bigr)g=0$. We recall that if for the operator $L=\frac{d}{dx}p\frac{d}{dx}+q$ a non-vanishing function $u_0$ satisfying $Lu_0=0$ is known, then the operator  $L$ possesses the Polya's factorization $L=\frac 1{u_0}\partial pu_0^2 \partial \frac 1{u_0}$, see, e.g., \cite{Polya1924}. Using the Polya's factorization  and the Abel's formula, general solutions of equations \eqref{NonHom SL1} and \eqref{NonHom SL2} can be written in the form
\begin{multline}\label{NonHom SL1 GenSol}
    u(x) = f(x) \int_{x_0}^x \frac{p_2(t)}{f^2(t)}\int_{x_0}^t f(s)\left( h_1(s) - \frac{q(s)}{p_2(s)}h_2(s) - \left(\frac{h_2(s)}{p_2(s)}\right)'\right)\,ds\,dt\\ + c_{11}f(x) + c_{12}f(x)\int_{x_0}^x\frac {p_2(s)}{f^2(s)}\,ds
\end{multline}
and
\begin{multline}\label{NonHom SL2 GenSol}
    v(x) = g(x) \int_{x_0}^x \frac{p_1(t)}{g^2(t)}\int_{x_0}^t g(s)\left( h_2(s) - \frac{q(s)}{p_1(s)}h_1(s) + \left(\frac{h_1(s)}{p_1(s)}\right)'\right)\,ds\,dt\\ + c_{21}g(x) + c_{22}g(x)\int_{x_0}^x\frac {p_1(s)}{g^2(s)}\,ds.
\end{multline}

A pair $u$, $v$ of solutions of equations \eqref{NonHom SL1}, \eqref{NonHom SL2} is a solution of the Cauchy problem \eqref{NonHom Dirac IC} for the nonhomogeneous Dirac system \eqref{NonHom Dirac} if and only if the functions $u$, $v$ satisfy
\begin{equation}
    u(x_0) = v(x_0) = 0,\qquad u'(x_0) = -h_2(x_0),\qquad v'(x_0)=h_1(x_0). \label{NonHom SL IC}
\end{equation}
Hence we obtain from \eqref{NonHom SL1 GenSol}--\eqref{NonHom SL IC} that $c_{11}=c_{21}=0$,
\begin{equation}\label{NonHom SL PartSol}
    c_{12} = -\frac{h_2(x_0)f(x_0)}{p_2(x_0)}\qquad\text{and}\qquad c_{22}=\frac{h_1(x_0)g(x_0)}{p_1(x_0)}.
\end{equation}

Now we integrate by parts the expression $f\cdot \Bigl(\frac{h_2}{p_2}\Bigr)'$ in \eqref{NonHom SL1 GenSol} and use the equality $f'=qf + p_2g$ to obtain the formula \eqref{NonHom Dirac u}:
\begin{equation*}
    \begin{split}
      u(x) &= f(x)\int_{x_0}^x \frac{p_2(t)}{f^2(t)} \biggl[\int_{x_0}^t \left( f(s) h_1(s) - \frac{q(s)f(s)h_2(s)}{p_2(s)} + \frac{f'(s)h_2(s)}{p_2(s)}\right)\,ds \\
        & \quad -\frac{f(t)h_2(t)}{p_2(t)}+\frac{f(x_0)h_2(x_0)}{p_2(x_0)}\biggr]\,dt - \frac{h_2(x_0)f(x_0)f(x)}{p_2(x_0)}\int_{x_0}^x\frac{p_2(s)}{f^2(s)}\,ds\\
    &= f(x) \int_{x_0}^x \biggl(-\frac{h_2(t)}{f(t)} + \frac{p_2(t)}{f^2(t)}\int_{x_0}^t \bigl( f(s) h_1(s) + g(s) h_2(s)\bigr)\,ds\biggr)dt.
    \end{split}
\end{equation*}
Similarly one obtains the second formula \eqref{NonHom Dirac v}.

The validity of the formulas \eqref{NonHom Dirac u} and \eqref{NonHom Dirac v} without any additional requirements on the functions $q$, $p_{1,2}$ and $h_{1,2}$ can be checked directly. Indeed, using $g'=-p_1 f - qg$ we obtain
\begin{equation}\label{DiracEq1}
    \begin{split}
        v'(x)&+p_1(x)u(x)+q(x)v(x) = \frac{g'(x)}{g(x)} v(x) + h_1(x)\\
         &\quad + \frac{p_1(x)}{g(x)} \int_{x_0}^x \bigl(f(s)h_1(s)+g(s)h_2(s)\bigr)\,ds + p_1(x)u(x) + q(x)v(x) \\
        &= h_1(x) + \frac{p_1(x)}{g(x)}\biggl( g(x)u(x) - f(x) v(x) + \int_{x_0}^x \bigl(f(s)h_1(s)+g(s)h_2(s)\bigr)\,ds \biggr).
    \end{split}
\end{equation}
It follows from the formulas \eqref{NonHom Dirac u} and \eqref{NonHom Dirac v} that
\begin{equation}\label{DiracEq2}
    \begin{split}
    g(x)u(x)-f(x)v(x) &= f(x)g(x) \int_{x_0}^x \biggl[-\frac{h_1(t)}{g(t)}-\frac{h_2(t)}{f(t)} \\
    &\quad + \left(\frac{p_2(t)}{f^2(t)}-\frac{p_1(t)}{g^2(t)}\right)\int_{x_0}^t \bigl( f(s) h_1(s) + g(s) h_2(s)\bigr)\,ds\biggr]dt.
    \end{split}
\end{equation}
Since the functions $f$ and $g$ satisfy \eqref{HomDirac}, by simple algebraic manipulations we find that
\begin{equation}\label{DiracEq3}
    \left(\frac{p_2}{f^2}-\frac{p_1}{g^2}\right) = -\left(\frac 1{fg}\right)',
\end{equation}
and integrating by parts in \eqref{DiracEq2} we obtain
\begin{equation*}
    \begin{split}
    g(x)u(x)-f(x)v(x) &= -\int_{x_0}^x \bigl( f(s) h_1(s) + g(s) h_2(s)\bigr)\,ds\\
    & \quad + f(x)g(x) \int_{x_0}^x \left[-\frac{h_1(t)}{g(t)}-\frac{h_2(t)}{f(t)} + \frac{f(t)h_1(t) + g(t)h_2(t)}{f(t)g(t)} \right]\,dt\\
    &= -\int_{x_0}^x \bigl( f(s) h_1(s) + g(s) h_2(s)\bigr)\,ds.
    \end{split}
\end{equation*}
Hence the expression in the brackets in \eqref{DiracEq1} is equal to zero and we have verified the first equation in \eqref{NonHom Dirac}. Second equation can be verified similarly. The initial conditions \eqref{NonHom Dirac IC} follow directly from the formulas \eqref{NonHom Dirac u} and \eqref{NonHom Dirac v}.
\end{proof}

The next lemma in an analogue of the Abel's formula. We construct the general solution of the homogeneous system \eqref{HomDirac} starting from a known particular solution.
\begin{lemma}\label{Lemma Hom Dirac}
Under the conditions of Theorem \ref{Thm DiracSPPS} the general solution of the homogeneous system \eqref{HomDirac} has the form
\begin{equation}\label{GenSol Hom Dirac}
    \begin{pmatrix}
    u(x) \\
    v(x)\\
    \end{pmatrix} = c_1
    \begin{pmatrix}
    f(x)\left(1-\kappa \int_{x_0}^x \frac{p_2(s)}{f^2(s)}\,ds\right) \\
    -\kappa g(x) \int_{x_0}^x \frac{p_1(s)}{g^2(s)}\,ds\\
    \end{pmatrix} + c_2
    \begin{pmatrix}
    \kappa f(x) \int_{x_0}^x \frac{p_2(s)}{f^2(s)}\,ds \\
    g(x)\left(1+\kappa \int_{x_0}^x \frac{p_1(s)}{g^2(s)}\,ds\right)\\
    \end{pmatrix},
\end{equation}
where $\kappa:=f(x_0)g(x_0)$ and $c_1$ and $c_2$ are arbitrary complex constants.
\end{lemma}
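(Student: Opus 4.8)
The plan is to verify directly that the two vector-functions that multiply $c_{1}$ and $c_{2}$ in \eqref{GenSol Hom Dirac} solve the homogeneous system \eqref{HomDirac}, and then to show that they are linearly independent; since \eqref{HomDirac} can be written as $\frac{dY}{dx}=BP(x)Y$ (because $B^{2}=-I$, so $B^{-1}=-B$), a first-order linear system with continuous coefficient matrix, its solution space is two-dimensional, and hence \eqref{GenSol Hom Dirac} is its general solution. Denote these two vector-functions by $\Phi_{1}$ and $\Phi_{2}$. They are not guessed blindly: $\Phi_{1}$ and $\Phi_{2}$ are exactly the $n=0$ terms $(f\widetilde X^{(0)},g\widetilde Y^{(0)})^{T}$ and $(fX^{(0)},gY^{(0)})^{T}$ of the series \eqref{DiracSPPS}, and they can also be read off from the Polya factorizations of the Sturm--Liouville equations \eqref{NonHom SL1}, \eqref{NonHom SL2} with $h_{1,2}\equiv 0$, whose homogeneous parts are solved by $f$ and $g$ respectively, once the two scalar solutions are coupled consistently with the first-order system; but none of this is needed for the proof itself.

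The verification is a direct computation. Substituting $\Phi_{1}=(u,v)^{T}$ into the second equation of \eqref{HomDirac}, differentiating the products and using $f'=qf+p_{2}g$ (which is the second equation of \eqref{HomDirac} for the pair $(f,g)$), all terms that do not contain the antiderivative $\int_{x_{0}}^{x}p_{2}/f^{2}$ cancel, while those that do collect into the expression $\kappa\, p_{2}\, g\int_{x_{0}}^{x}\bigl(p_{2}/f^{2}-p_{1}/g^{2}\bigr)\,ds$; invoking the identity \eqref{DiracEq3}, that is $p_{2}/f^{2}-p_{1}/g^{2}=-(1/(fg))'$, this integral is exact and equals $-1/(f(x)g(x))+1/\kappa$, and the boundary term $1/\kappa$ is precisely what is needed to annihilate the remaining terms. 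The first equation of \eqref{HomDirac} for $\Phi_{1}$, and both equations for $\Phi_{2}$, are handled the same way, this time using $g'=-p_{1}f-qg$ together with \eqref{DiracEq3}. No differentiability of $p_{1,2},q$ is needed beyond what is already implicit in \eqref{DiracEq3} (which only asserts that $1/(fg)$ is absolutely continuous with the stated derivative, a consequence of $f,g$ solving \eqref{HomDirac} and being non-vanishing), so the computation is valid under the hypotheses of Theorem \ref{Thm DiracSPPS}; this is why we prefer the direct check to a reduction-of-order derivation, which would require $C^{1}$ coefficients as in the first part of the proof of Lemma \ref{Lemma NonHom Dirac}.

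For linear independence it suffices to evaluate at $x_{0}$: from the formulas one gets $\Phi_{1}(x_{0})=(f(x_{0}),0)^{T}$ and $\Phi_{2}(x_{0})=(0,g(x_{0}))^{T}$, whose determinant is $f(x_{0})g(x_{0})=\kappa\neq 0$ because the particular solution $(f,g)^{T}$ is non-vanishing. Thus $\Phi_{1}$ and $\Phi_{2}$ form a fundamental system of solutions of \eqref{HomDirac}, and \eqref{GenSol Hom Dirac} follows. The only place that requires care is the algebra of the middle step, and in particular keeping track of the boundary term produced when \eqref{DiracEq3} is integrated; it has to be carried out for each of the four scalar equations, but beyond that the argument is entirely standard linear ODE theory.
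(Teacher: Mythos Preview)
Your proposal is correct and follows essentially the same route as the paper: direct substitution of the two candidate solutions into \eqref{HomDirac}, followed by linear independence from the values at $x_{0}$. The paper merely states that ``verification \ldots\ can be done by the direct substitution'' and that linear independence ``can be seen from their values at $x=x_{0}$,'' while you actually outline the mechanics of that verification, in particular the use of the identity \eqref{DiracEq3} (proved in Lemma~\ref{Lemma NonHom Dirac}) to collapse the integral terms; your description of which terms cancel before invoking \eqref{DiracEq3} is slightly imprecise (the non-integral terms $-p_{2}g$ and $\kappa p_{2}/f$ do not cancel among themselves but are killed by the evaluated integral, as you note at the end), but the computation you sketch is the right one and goes through.
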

\begin{proof}
Similarly to the proof of Lemma \ref{Lemma NonHom Dirac} under the additional assumptions on $p_{1,2}$ and $q$ the formula \eqref{GenSol Hom Dirac} can be obtained from the expressions \eqref{NonHom SL1 GenSol} and \eqref{NonHom SL2 GenSol} considering $h_1=h_2=0$ and two initial conditions $\begin{pmatrix}
u(x_0)\\
v(x_0)\\
\end{pmatrix}=\begin{pmatrix}f(x_0)\\
0\\
\end{pmatrix}$ (giving us the expression at $c_1$) and $\begin{pmatrix}
u(x_0)\\
v(x_0)\\
\end{pmatrix}=\begin{pmatrix}0\\
g(x_0)\\
\end{pmatrix}$ (giving us the expression at $c_2$).

Without any additional assumption  verification of the fact that \eqref{GenSol Hom Dirac} is a solution of \eqref{HomDirac} can be done by the direct substitution. The solution given by the formula \eqref{GenSol Hom Dirac} is general since the expressions at $c_1$ and at $c_2$ are linearly independent which can be seen from their values at $x=x_0$.
\end{proof}

\begin{lemma}\label{Lemma FP estimates}
Under the conditions of Theorem \ref{Thm DiracSPPS} let us define
\begin{align}
    c:= & \max\bigl\{ \|X^{(0)}\|,  \|Y^{(0)}\|, \|\widetilde X^{(0)}\|, \|\widetilde Y^{(0)}\| \bigr\}, & c_1:= & \max\bigl\{ \|f^2r_{11}+g^2r_{21}\|,  \|f^2r_{12}+g^2r_{22}\| \bigr\}, \label{Eq Estimates C and C1}\\
    c_2:= & \max\left\{ \left\|\frac{p_1}{g^2}\right\|, \left\|\frac{p_2}{f^2}\right\| \right\}, & c_3:= & \max\left\{ \left\|r_{11}\frac{f}{g}\right\|, \|r_{12}\|, \|r_{21}\|, \left\|r_{22}\frac{g}{f}\right\|\right\}, \label{Eq Estimates C2 and C3}
\end{align}
where $\|\cdot\|$ denotes max-norm on $[a,b]$. Then the following estimates hold for the functions $X^{(n)}$, $Y^{(n)}$, $\widetilde X^{(n)}$, $\widetilde Y^{(n)}$, $Z^{(n)}$, $\widetilde Z^{(n)}$, $n\ge 0$.
\begin{gather}
    \max\{|X^{(n)}(x)|, |Y^{(n)}(x)|, |\widetilde X^{(n)}(x)|, |\widetilde Y^{(n)}(x)|\}  \le c\cdot 2^n n! \sum_{k=0}^n\binom{n}{k}(c_1c_2)^k c_3^{n-k}\frac{|x-x_0|^{n+k}}{(n+k)!},\label{Eq Estimate Xn}\\
    \max\{|Z^{(n)}(x)|, |\widetilde Z^{(n)}(x)|\}  \le cc_1\cdot 2^{n+1} n! \sum_{k=0}^n\binom{n}{k}(c_1c_2)^k c_3^{n-k}\frac{|x-x_0|^{n+k+1}}{(n+k+1)!}.\label{Eq Estimate Zn}
\end{gather}
\end{lemma}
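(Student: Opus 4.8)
The plan is to prove the two estimates \eqref{Eq Estimate Xn} and \eqref{Eq Estimate Zn} together by induction on $n$, treating the untilded functions $X^{(n)},Y^{(n)},Z^{(n)}$ and the tilded functions $\widetilde X^{(n)},\widetilde Y^{(n)},\widetilde Z^{(n)}$ on an equal footing, since they obey the identical recursions \eqref{Zn}--\eqref{Yn}, differing only in the initial functions, whose size is by definition bounded by $c$. In the induction I carry the bound \eqref{Eq Estimate Xn} on the quadruple $X^{(n)},Y^{(n)},\widetilde X^{(n)},\widetilde Y^{(n)}$; the bound \eqref{Eq Estimate Zn} on $Z^{(n)},\widetilde Z^{(n)}$ will be an immediate corollary at each level. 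The base case $n=0$ is trivial: \eqref{Eq Estimate Xn} reduces to $\max\{|X^{(0)}|,|Y^{(0)}|,|\widetilde X^{(0)}|,|\widetilde Y^{(0)}|\}\le c$ by the definition of $c$, and from \eqref{Zn}, estimating $f^2r_{11}+g^2r_{21}$ and $f^2r_{12}+g^2r_{22}$ by $c_1$ gives $|Z^{(0)}(x)|\le c_1\bigl|\int_{x_0}^x(|X^{(0)}|+|Y^{(0)}|)\,ds\bigr|\le 2cc_1|x-x_0|$, which is the right-hand side of \eqref{Eq Estimate Zn} at $n=0$.

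For the inductive step, assuming \eqref{Eq Estimate Xn} at index $n$, I first derive \eqref{Eq Estimate Zn} at index $n$: from \eqref{Zn}, $|Z^{(n)}(x)|\le 2c_1\bigl|\int_{x_0}^x\max\{|X^{(n)}|,|Y^{(n)}|\}\,ds\bigr|$, and inserting the inductive bound and integrating each monomial via $\bigl|\int_{x_0}^x\frac{|s-x_0|^m}{m!}\,ds\bigr|=\frac{|x-x_0|^{m+1}}{(m+1)!}$ reproduces \eqref{Eq Estimate Zn} verbatim. Next, from \eqref{Xn} with $|r_{21}|,|r_{22}g/f|\le c_3$, $|p_2/f^2|\le c_2$, and from \eqref{Yn} with $|r_{11}f/g|,|r_{12}|\le c_3$, $|p_1/g^2|\le c_2$, all four of $X^{(n+1)},Y^{(n+1)},\widetilde X^{(n+1)},\widetilde Y^{(n+1)}$ satisfy the same bound
\[
|X^{(n+1)}(x)|\ \le\ (n+1)\Bigl|\int_{x_0}^x\bigl(2c_3\max\{|X^{(n)}|,|Y^{(n)}|\}+c_2|Z^{(n)}|\bigr)\,ds\Bigr|,
\]
and likewise for $Y^{(n+1)}$ and the tilded functions. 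Substituting the inductive bounds for $\max\{|X^{(n)}|,|Y^{(n)}|\}$ and for $|Z^{(n)}|$ and integrating produces two sums: one with an extra factor $c_3$ and the power of $|x-x_0|$ raised by one, one with an extra factor $c_1c_2$ and the power raised by two.

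The step I expect to be the only substantive one is the combinatorial merging of these two sums. After reindexing the first sum by $k\mapsto k$ and the second by $k\mapsto k-1$, the monomials $(c_1c_2)^k c_3^{\,n+1-k}\,|x-x_0|^{\,n+1+k}/(n+1+k)!$ align, the scalar prefactor becomes $c\cdot 2^{n+1}(n+1)!$, and Pascal's identity $\binom{n}{k}+\binom{n}{k-1}=\binom{n+1}{k}$ (with the convention $\binom{n}{-1}=\binom{n}{n+1}=0$) merges the coefficients into $\binom{n+1}{k}$, yielding precisely $c\cdot 2^{n+1}(n+1)!\sum_{k=0}^{n+1}\binom{n+1}{k}(c_1c_2)^k c_3^{\,n+1-k}\frac{|x-x_0|^{\,n+1+k}}{(n+1+k)!}$, i.e. \eqref{Eq Estimate Xn} at index $n+1$. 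The identical computation for $Y^{(n+1)}$ and for the tilded functions completes the induction. Throughout, only $\bigl|\int_{x_0}^x|s-x_0|^m\,ds\bigr|=|x-x_0|^{m+1}/(m+1)$ is used, so the argument does not depend on whether $x\ge x_0$ or $x\le x_0$.
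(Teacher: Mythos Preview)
Your proof is correct and follows essentially the same inductive strategy as the paper's own proof: establish \eqref{Eq Estimate Xn} at level $n$, use it together with \eqref{Zn} to obtain \eqref{Eq Estimate Zn} at level $n$, then feed both into \eqref{Xn}--\eqref{Yn} and merge the two resulting sums via Pascal's identity to recover \eqref{Eq Estimate Xn} at level $n+1$. Your treatment is in fact slightly more explicit than the paper's (which simply writes the final equality without naming Pascal's rule), and your use of $\bigl|\int_{x_0}^x|s-x_0|^m\,ds\bigr|=\frac{|x-x_0|^{m+1}}{m+1}$ to handle both signs of $x-x_0$ at once is a clean alternative to the paper's ``the case $x\le x_0$ is similar.''
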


\begin{proof}
The proof is straightforward by induction. For $n=0$, \eqref{Eq Estimate Xn} follows directly from the definition of the constant $c$. Suppose \eqref{Eq Estimate Xn} is true for some $n$. Then assuming $x\ge x_0$ we obtain from \eqref{Zn} and \eqref{Eq Estimate Zn}
\begin{equation*}
    \begin{split}
       |Z^{(n)}(x)| & \le \int_{x_0}^x \left( 2c_1\cdot c\cdot 2^nn! \sum_{k=0}^n\binom{n}{k}(c_1c_2)^k c_3^{n-k}\frac{(t-x_0)^{n+k}}{(n+k)!}\right)\,dt \\
         & = cc_1\cdot 2^{n+1} n! \sum_{k=0}^n\binom{n}{k}(c_1c_2)^k c_3^{n-k}\frac{|x-x_0|^{n+k+1}}{(n+k+1)!}
     \end{split}
\end{equation*}
and from \eqref{Xn} and \eqref{Eq Estimate Xn} we obtain
    \begin{align*}
       |X^{(n)}(x)| & \le 2^{n+1}(n+1)! c\int_{x_0}^x \left( c_3 \sum_{k=0}^n\binom{n}{k}(c_1c_2)^k c_3^{n-k}\frac{(t-x_0)^{n+k}}{(n+k)!} \right.
       \displaybreak[2]
       \\
       &\quad + \left. c_1c_2 \sum_{k=0}^n\binom{n}{k}(c_1c_2)^k c_3^{n-k}\frac{(t-x_0)^{n+k+1}}{(n+k+1)!}\right)\,dt\\
       & = c(n+1)!2^{n+1} \sum_{k=0}^{n+1}\binom{n+1}{k}(c_1c_2)^k c_3^{n+1-k}\frac{|x-x_0|^{n+k+1}}{(n+k+1)!}.
     \end{align*}
The proof for the case $x\le x_0$ and for the functions $Y^{(n)}$, $\widetilde X^{(n)}$, $\widetilde Y^{(n)}$, $\widetilde Z^{(n)}$ is similar.
\end{proof}

\begin{corollary}\label{Corr FP estimates}
Under the conditions of Lemma \ref{Lemma FP estimates} the following estimates hold for any $n\ge 0$
\begin{gather}
    \max\{|X^{(n)}(x)|, |Y^{(n)}(x)|, |\widetilde X^{(n)}(x)|, |\widetilde Y^{(n)}(x)|\}  \le c\cdot 2^n |x-x_0|^n\bigl( c_1c_2|x-x_0|+c_3\bigr)^n,\label{Eq SimpleEstimate Xn} \\
    \max\{|Z^{(n)}(x)|, |\widetilde Z^{(n)}(x)|\}  \le cc_1\cdot 2^{n+1} |x-x_0|^{n+1}\bigl( c_1c_2|x-x_0|+c_3\bigr)^n.\label{Eq SimpleEstimate Zn}
\end{gather}
\end{corollary}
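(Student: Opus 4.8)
The plan is to obtain the estimates \eqref{Eq SimpleEstimate Xn} and \eqref{Eq SimpleEstimate Zn} as immediate consequences of Lemma \ref{Lemma FP estimates} together with two elementary facts: the inequality $n!/(n+k)!\le 1$, valid for all integers $n,k\ge 0$ since $(n+k)!=n!\cdot(n+1)(n+2)\cdots(n+k)\ge n!$, and the binomial theorem. Throughout write $t:=|x-x_0|$ for brevity, and note that all the constants $c,c_1,c_2,c_3$ and $t$ are nonnegative.

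Starting from \eqref{Eq Estimate Xn}, I would factor $t^n$ out of the sum and regroup the remaining powers of $t$ with the coefficient $(c_1c_2)^k$, obtaining
\[
c\cdot 2^n n!\sum_{k=0}^n\binom nk (c_1c_2)^k c_3^{\,n-k}\frac{t^{n+k}}{(n+k)!}
= c\cdot 2^n t^n\sum_{k=0}^n\binom nk (c_1c_2 t)^k c_3^{\,n-k}\,\frac{n!}{(n+k)!}.
\]
Applying $n!/(n+k)!\le 1$ termwise (all summands are nonnegative) and then the binomial identity $\sum_{k=0}^n\binom nk (c_1c_2 t)^k c_3^{\,n-k}=(c_1c_2 t+c_3)^n$ yields precisely the right-hand side of \eqref{Eq SimpleEstimate Xn}.

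For \eqref{Eq SimpleEstimate Zn} the argument is identical, starting from \eqref{Eq Estimate Zn}: factor out $t^{n+1}$, use $n!/(n+k+1)!\le 1$ (again because $(n+k+1)!\ge n!$), and collapse the sum by the binomial theorem. Since no induction or delicate analysis is involved, there is no real obstacle; the only point requiring a moment's care is to confirm that every term in the sums \eqref{Eq Estimate Xn}--\eqref{Eq Estimate Zn} is nonnegative, so that the termwise replacement $n!/(n+k)!\le 1$ is legitimate — which is clear from the nonnegativity of $c,c_1,c_2,c_3$ and $t$ noted above.
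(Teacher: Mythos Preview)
Your proof is correct and essentially identical to the paper's: both start from the estimate of Lemma~\ref{Lemma FP estimates}, factor out $|x-x_0|^n$, replace $n!/(n+k)!$ by $1$ (equivalently $1/(n+k)!\le 1/n!$), and close with the binomial theorem. The paper only writes out the $X^{(n)}$ case explicitly, just as you treat $Z^{(n)}$ by the same manipulation.
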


\begin{proof}
The result follows from Lemma \ref{Lemma FP estimates}. Indeed, we have, for example, for the function $X^{(n)}$
\begin{equation*}
    \begin{split}
       |X^{(n)}(x)| & \le c\cdot 2^nn! \sum_{k=0}^n\binom{n}{k}(c_1c_2)^k c_3^{n-k}\frac{|x-x_0|^{n+k}}{(n+k)!} \\
         & \le c\cdot 2^nn! |x-x_0|^n \sum_{k=0}^n\binom{n}{k}(c_1c_2)^k c_3^{n-k}\frac{|x-x_0|^{k}}{n!}\\
         & = c\cdot 2^n |x-x_0|^n \bigl( c_1c_2|x-x_0|+c_3\bigr)^n.
     \end{split}
\end{equation*}
\end{proof}

Now we present the proof of Theorem \ref{Thm DiracSPPS}.
\begin{proof}[Proof of Theorem \ref{Thm DiracSPPS}]
It follows from Corollary \ref{Corr FP estimates} that both series in \eqref{DiracSPPS} converge uniformly on $[a,b]$ as well as the series of  termwise derivatives, hence it is possible to apply the Dirac operator $B\frac{d}{dx}+P$ termwise to the series. Consider the second series in \eqref{DiracSPPS}. As it follows from the definitions \eqref{X0}--\eqref{Yn} and Lemmas \ref{Lemma NonHom Dirac} and \ref{Lemma Hom Dirac}, the functions $X^{(n)}$, $Y^{(n)}$, $n\ge 0$ satisfy
\begin{equation*}
     B\frac{d}{dx}\begin{pmatrix}
     fX^{(n)}\\
     gY^{(n)}
     \end{pmatrix}
      + P(x)\begin{pmatrix}
     fX^{(n)}\\
     gY^{(n)}
     \end{pmatrix} = n\cdot R(x)\begin{pmatrix}
     fX^{(n-1)}\\
     gY^{(n-1)}
     \end{pmatrix},
\end{equation*}
where for $n=0$ the symbols $X^{(-1)}$ and $Y^{(-1)}$ appear only to unify the notation. Therefore,
\begin{equation*}
\left(B\frac{d}{dx}+P\right) \begin{pmatrix}
    u_2 \\
    v_2 \\
\end{pmatrix}= \sum_{n=1}^\infty \frac{\lambda^n}{n!}\cdot n R\begin{pmatrix}
    f X^{(n-1)} \\
    g Y^{(n-1)} \\
\end{pmatrix} = \lambda R\sum_{n=1}^\infty \frac{\lambda^{n-1}}{(n-1)!}\begin{pmatrix}
    f X^{(n-1)} \\
    g Y^{(n-1)} \\
\end{pmatrix}=\lambda R\begin{pmatrix}
    u_2 \\
    v_2 \\
\end{pmatrix}.
\end{equation*}
That is, the second series in \eqref{DiracSPPS} is a solution of the system \eqref{GenDiracMatrix}. The proof for the first series in \eqref{DiracSPPS} is the same.

The linear independence of the solutions $(u_1,v_1)^T$ and $(u_2,v_2)^T$ can be obtained considering their values at $x=x_0$. Indeed, as it follows from the definitions \eqref{X0}--\eqref{Yt0},
\begin{equation}\label{DiracSPPS IC}
    Y_1(x_0)=\begin{pmatrix}
    u_1(x_0) \\
    v_1(x_0) \\
\end{pmatrix} =
\begin{pmatrix}
    f(x_0) \\
    0 \\
\end{pmatrix}
\qquad\text{and}\qquad Y_2(x_0)=\begin{pmatrix}
    u_2(x_0) \\
    v_2(x_0) \\
\end{pmatrix} =
\begin{pmatrix}
    0 \\
    g(x_0) \\
\end{pmatrix}.
\end{equation}
\end{proof}

\begin{remark}
Taking into account the results of Example \ref{ExampleSL} one can see that Theorem \ref{Thm DiracSPPS} contains the SPPS representation from \cite{KrPorter2010} as a particular case.
\end{remark}

\subsection{Construction of a non-vanishing particular solution}\label{SubSectPartSol}
In this subsection we show how the general solution of the homogeneous system \eqref{HomDirac} can be obtained from Theorem \ref{Thm DiracSPPS} and discuss how one can select a solution $(u,v)^T$ of \eqref{HomDirac} such that both functions $u$ and $v$ are non-vanishing on $[a,b]$.

The system \eqref{HomDirac} can be rewritten either as
\begin{equation}\label{System1HomDirac}
    \begin{cases}
        v' = -p_1 u - q v ,\\
        -u'  = -q u - p_2 v,
    \end{cases}
\end{equation}
or as
\begin{equation}\label{System2HomDirac}
    \begin{cases}
        v'+ q v = -p_1 u ,\\
        -u'+q u  = - p_2 v.
    \end{cases}
\end{equation}
Both systems can be considered as particular cases of a system of the type \eqref{GenDirac} taking $\lambda=-1$ and corresponding matrices $P$ and $R$. The homogeneous system associated to \eqref{System1HomDirac} possesses a non-vanishing particular solution $(u, v)^T = (1, 1)^T$, while the homogeneous system associated to \eqref{System2HomDirac} possesses a non-vanishing particular solution $(u, v)^T = (\exp(\int q(s)\,ds), \exp(-\int q(s)\,ds))^T$. Hence, the general  solution of the homogeneous system \eqref{HomDirac} can be obtained from Theorem \ref{Thm DiracSPPS} applied either to the system \eqref{System1HomDirac} or to the system \eqref{System2HomDirac}.

In the case when all the coefficients $p_{1,2}$ and $q$ are real-valued functions, it is possible to construct a non-vanishing solution (complex-valued) explicitly. The following well-known proposition can be used.

\begin{proposition}\label{Prop NonVanishingSol1}
Let the coefficients $p_{1,2}$ and $q$ of the system \eqref{HomDirac} are real-valued functions and $(u_1,v_1)^T$ and $(u_2,v_2)^T$ are two linearly independent real-valued solutions of the system \eqref{HomDirac}. Then the linear combination
\begin{equation*}
    \begin{pmatrix}
        u\\
        v
    \end{pmatrix} =
    \begin{pmatrix}
        u_1\\
        v_1
    \end{pmatrix} +
    i\begin{pmatrix}
        u_2\\
        v_2
    \end{pmatrix}
\end{equation*}
is a non-vanishing solution of the system \eqref{HomDirac}, i.e., both functions $u$ and $v$ do not have zeros on $[a,b]$.
\end{proposition}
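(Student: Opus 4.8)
The plan is to show that the claimed complex solution $(u,v)^T = (u_1,v_1)^T + i(u_2,v_2)^T$ cannot vanish in either component at any point of $[a,b]$, and the key observation is that the Wronskian-type quantity $W(x) := u_1(x)v_2(x) - u_2(x)v_1(x)$ is nowhere zero. First I would establish this by computing $W'$: using the system \eqref{HomDirac} written as $u' = qu + p_2 v$ and $v' = -p_1 u - q v$ (valid for each of the two real solutions), a direct differentiation gives $W' = (u_1'v_2 + u_1 v_2') - (u_2'v_1 + u_2 v_1')$, and substituting the expressions for the derivatives the terms involving $p_1$, $p_2$ cancel in pairs and the $q$-terms cancel as well, yielding $W' \equiv 0$. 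Hence $W$ is a nonzero constant — nonzero precisely because $(u_1,v_1)^T$ and $(u_2,v_2)^T$ are linearly independent (if $W$ vanished at one point it would vanish identically, forcing the two solution vectors to be proportional at that point and therefore, by uniqueness for the linear ODE system, proportional everywhere).

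Next I would use the nonvanishing of $W$ to rule out zeros of $u$ and of $v$ separately. Suppose $u(x_*) = 0$ for some $x_* \in [a,b]$; since $u_1, u_2$ are real-valued, this forces $u_1(x_*) = u_2(x_*) = 0$ simultaneously. But then $W(x_*) = u_1(x_*)v_2(x_*) - u_2(x_*)v_1(x_*) = 0$, contradicting $W \not\equiv 0$. The same argument applied to $v(x_*) = 0$ gives $v_1(x_*) = v_2(x_*) = 0$, again forcing $W(x_*) = 0$, a contradiction. Therefore neither $u$ nor $v$ has a zero on $[a,b]$, which is exactly the assertion. Finally, since any complex linear combination of real solutions of the linear homogeneous system \eqref{HomDirac} is again a solution, $(u,v)^T$ solves \eqref{HomDirac}, completing the proof.

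I do not anticipate a serious obstacle here: the statement is explicitly flagged as "well-known" and the proof is a short Wronskian computation. The only point requiring a little care is the vanishing of $W'$, where one must substitute the correct signs from the system \eqref{HomDirac} (note the $-u'$ on the left-hand side of the second equation, so $u' = qu + p_2 v$) and verify that the four products cancel; this is a routine but sign-sensitive calculation. A secondary point worth stating cleanly is why linear independence of the two real solutions is equivalent to $W$ being nonzero at one (equivalently every) point — this follows from the standard uniqueness theorem for first-order linear systems, since $B$ is invertible and the coefficients are continuous, so \eqref{HomDirac} has the form $Y' = -B^{-1}P(x)Y$ to which the usual existence-uniqueness theory applies.
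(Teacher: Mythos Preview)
Your proof is correct and uses essentially the same idea as the paper: if $u(x_*)=0$, real-valuedness forces $u_1(x_*)=u_2(x_*)=0$, so the two solution vectors are proportional at $x_*$, contradicting linear independence (via uniqueness for the first-order system). The paper's version is a bit shorter in that it skips the Wronskian computation and argues this proportionality at the point directly, but your route through $W'\equiv 0$ and $W\equiv\text{const}\neq 0$ is the standard and equivalent way to package the same reasoning.
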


\begin{proof}
Suppose that at some point $x_0$ we have $u(x_0) = 0$. Since both functions $u_1$ and $u_2$ are real valued, $u_1(x_0)=u_2(x_0)=0$ which means that at the point $x_0$  $(u_1(x_0),v_1(x_0))^T=(0,v_1(x_0))^T = c\cdot (0,v_2(x_0))^T=c\cdot (u_2(x_0),v_2(x_0))^T$ for some constant $c$, a contradiction with the linear independency of the solutions.
\end{proof}

In the general situation when one or several of the coefficients of the system \eqref{HomDirac} may possess complex values, we are not aware of any explicit method of constructing a non-vanishing solution. However the situation is not that bad, at least a non-vanishing solution always exists and there are plenty of them.

\begin{proposition}\label{Prop NonVanishingSol2}
Let $(u_1,v_1)^T$ and $(u_2,v_2)^T$ be two linearly independent solutions of \eqref{HomDirac}. Then there exists a linear combination
\begin{equation*}
    \begin{pmatrix}
        u\\
        v
    \end{pmatrix} =
    c_1\begin{pmatrix}
        u_1\\
        v_1
    \end{pmatrix} +
    c_2\begin{pmatrix}
        u_2\\
        v_2
    \end{pmatrix}
\end{equation*}
such that both functions $u$ and $v$ are non-vanishing on $[a,b]$.
\end{proposition}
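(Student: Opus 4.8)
The plan is to reduce the statement to an elementary fact about the topology of the Riemann sphere. First I would record the analogue of Abel's formula for the system \eqref{HomDirac}: writing it as $Y'=BP(x)Y$ and noting that $\operatorname{tr}\bigl(BP(x)\bigr)=q(x)-q(x)=0$, the Wronskian $W:=u_1v_2-u_2v_1$ is constant on $[a,b]$; moreover $W\neq 0$ because $(u_1,v_1)^T$ and $(u_2,v_2)^T$ are linearly independent. (Alternatively, this is immediate from Lemma \ref{Lemma Hom Dirac}.)

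Next I would attach to each point $x\in[a,b]$ the one-dimensional space of ``forbidden'' coefficient vectors. Since $W$ never vanishes, at every $x$ the numbers $u_1(x),u_2(x)$ cannot vanish simultaneously (else the first row of the fundamental matrix would be zero, forcing $W(x)=0$), and likewise for $v_1(x),v_2(x)$. Hence the maps
\[
\phi(x):=[-u_2(x):u_1(x)],\qquad \psi(x):=[-v_2(x):v_1(x)]
\]
are well defined and continuous from $[a,b]$ into $\mathbb{CP}^1$, and for a nonzero pair $(c_1,c_2)$ one has $c_1u_1(x)+c_2u_2(x)=0$ exactly when $[c_1:c_2]=\phi(x)$, and $c_1v_1(x)+c_2v_2(x)=0$ exactly when $[c_1:c_2]=\psi(x)$. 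The crucial observation is that $\phi(x)\neq\psi(x)$ for every $x\in[a,b]$: equality would mean that $(-u_2(x),u_1(x))$ and $(-v_2(x),v_1(x))$ are proportional, i.e. $u_1(x)v_2(x)-u_2(x)v_1(x)=W(x)=0$, a contradiction.

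Finally I would invoke connectedness. The sets $\Phi:=\phi([a,b])$ and $\Psi:=\psi([a,b])$ are non-empty compact (hence closed) subsets of $\mathbb{CP}^1$, and by the previous step they are disjoint. If their union were all of $\mathbb{CP}^1\cong S^2$, this would present $S^2$ as a disjoint union of two non-empty closed sets, contradicting its connectedness. Therefore one can choose $[c_1:c_2]\in\mathbb{CP}^1\setminus(\Phi\cup\Psi)$, and for any representative $(c_1,c_2)$ the combination $(u,v)^T=c_1(u_1,v_1)^T+c_2(u_2,v_2)^T$ satisfies $u(x)\neq 0$ and $v(x)\neq 0$ for all $x\in[a,b]$, which is what we want.

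The argument is essentially self-contained; the only point requiring some care is the bookkeeping that makes $\phi$ and $\psi$ genuinely continuous $\mathbb{CP}^1$-valued maps, including at points where $u_2$ or $v_2$ vanishes, but this is routine once one knows $u_1,u_2$ (resp. $v_1,v_2$) never vanish together. One could equally phrase everything on $\widehat{\mathbb C}=\mathbb C\cup\{\infty\}$ via the ratio $c_2/c_1$, handling the value $\infty$ (the choice $c_1=0$) separately; the topological heart, namely that two disjoint compact arcs cannot cover the sphere, is the same.
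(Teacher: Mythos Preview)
Your argument contains a genuine gap at the connectedness step. From the pointwise inequality $\phi(x)\neq\psi(x)$ for each $x\in[a,b]$ you conclude that the \emph{images} $\Phi=\phi([a,b])$ and $\Psi=\psi([a,b])$ are disjoint, but this does not follow: nothing prevents $\phi(x_1)=\psi(x_2)$ for $x_1\neq x_2$. A concrete counterexample already occurs for the system \eqref{HomDirac} with $q=0$, $p_1=p_2=1$ on $[0,\pi/2]$. Here $u_1=\cos x$, $v_1=-\sin x$, $u_2=\sin x$, $v_2=\cos x$, so $\phi(x)=[-\sin x:\cos x]$ and $\psi(x)=[\cos x:\sin x]$; one checks $\phi(0)=[0:1]=\psi(\pi/2)$ and $\phi(\pi/2)=[1:0]=\psi(0)$, so $\Phi\cap\Psi\neq\varnothing$. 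Once disjointness fails, the connectedness of $S^2$ gives you nothing, and in fact $\Phi\cup\Psi$ here is the whole real circle $\mathbb{RP}^1\subset\mathbb{CP}^1$.

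The paper's proof avoids this by using measure rather than connectedness: since the solutions are $C^1$, the maps into $\mathbb{CP}^1$ are differentiable, and Sard's theorem forces each image (hence their union) to have measure zero in $\mathbb{CP}^1$; any point in the complement gives admissible $(c_1,c_2)$. Note that continuity alone would not suffice even for a single curve, since continuous images of intervals can be space-filling. If you wish to keep a purely topological argument, you would need something like the fact that a $C^1$ image of an interval has empty interior in a $2$-manifold, which is essentially Sard again.
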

\begin{proof}
The proof is based on the Sard's theorem and follows the proofs of Proposition 2.2 and Corollary 2.3 from \cite{CamporesiScala2011},
see also \cite[Remark 5]{KrPorter2010}.
Consider the complex projective space $\mathbb{CP}^1$, i.e., the quotient of $\mathbb{C}^2\setminus \{0\}$ by the action of $\mathbb{C}^*$. Proposition 2.2 from \cite{CamporesiScala2011} states the following. Let $I\subset \mathbb{R}$ be an interval. A differentiable map $f:I\to \mathbb{CP}^1$ cannot be surjective. And the proof is that the Sard's theorem implies that the image $f(I)$ has measure zero.

In Corollary 2.3 \cite{CamporesiScala2011} for two differentiable functions $y_1$ and $y_2$ which do not vanish simultaneously, authors consider a differentiable map
\begin{equation}\label{SardMap}
f:I\to \mathbb{CP}^1,\qquad x\mapsto f(x)=[y_1(x):y_2(x)].
\end{equation}
If a linear combination $c_1y_1 + c_2y_2$ vanishes at some point $x_0\in I$, then the determinant $\begin{vmatrix}  y_1(x_0) & -c_2 \\                                        y_2(x_0) & c_1\end{vmatrix}$ is equal to zero, which implies that $(y_1(x_0), y_2(x_0))$ is proportional to $(-c_2, c_1)$, hence $[-c_2:c_1]$ belongs to the image $f(I)$. Since the image $f(I)$ has a measure zero, the set of complex constants $[c_1:c_2]\in\mathbb{CP}^1$ for which the linear combination $c_1y_1 + c_2y_2$ vanishes at some point, has measure zero.

Now we consider $y_1=u_1$, $y_2=u_2$ and the corresponding map $f_1$ given by \eqref{SardMap}. As was mentioned in the proof of Proposition \ref{Prop NonVanishingSol1}, the functions $u_1$ and $u_2$ do not vanish simultaneously, hence the image $f_1([a,b])$ has measure zero. The same reasoning applies to $y_1=v_1$, $y_2=v_2$ and the corresponding map $f_2$. The union $f_1([a,b]) \cup f_2([a,b])$ also has measure zero, hence for almost all points $[c_1:c_2]\in\mathbb{CP}^1$ both linear combinations $c_1u_1+c_2u_2$ and $c_1v_1+c_2v_2$ do not vanish at any point of the segment $[a,b]$.
\end{proof}

\begin{remark}\label{RemMeasureZero}
As can be seen from the proof of Proposition \ref{Prop NonVanishingSol2}, one can obtain a non-vanishing solution of \eqref{HomDirac} by taking two linearly independent solutions $(u_1,v_1)^T$ and $(u_2,v_2)^T$  of \eqref{HomDirac}, choosing randomly a point on $\mathbb{CP}^1$, i.e., some complex numbers $c_1$ and $c_2$, and verifying if the linear combination $c_1(u_1,v_1)^T+c_2(u_2,v_2)^T$ vanishes on $[a,b]$. If not, we are done. If yes, repeating the process. Since the set of ``bad'' coefficients has measure zero, the non-vanishing solution in most cases will be obtained on the first try.
\end{remark}

\subsection{Spectral shift}\label{SubSectSpectralShift}
Similarly to the Taylor series, an approximation of the solution given by a truncation of the series \eqref{DiracSPPS} is more accurate near the origin, while the accuracy deteriorates as the absolute value of the parameter $\lambda$ increases. In \cite{CKT2013, KKB2013, KrPorter2010, KTV} the spectral shift technique was introduced and successfully applied to improve the accuracy of the approximation for the large $\lambda$.

Under the assumption that the matrix $R$ is symmetric, i.e., $r_{12}\equiv r_{21}$ or equivalently
\begin{equation}\label{TraceCondition}
    \operatorname{tr}BR(x)\equiv 0,\qquad x\in [a,b],
\end{equation}
application of the spectral shift is straightforward. Let a non-vanishing particular solution $(u,v)^T$ of \eqref{GenDiracMatrix} is known for some $\lambda=\lambda_0$. We can rewrite the system as
\begin{equation}\label{GenDirac SpShift}
    B\frac{dY}{dx} + \bigl( P(x)-\lambda_0 R(x)\bigr) Y = (\lambda-\lambda_0) R(x) Y,
\end{equation}
which is again a system of the type \eqref{GenDirac} due to the assumption \eqref{TraceCondition}, and $(u,v)^T$ is a non-vanishing solution of the system \eqref{GenDirac SpShift} corresponding to $\lambda-\lambda_0=0$. Hence one can construct the formal powers and obtain the general solution of \eqref{GenDirac SpShift} (which also is a general solution of \eqref{GenDirac}) as the series with respect to the spectral parameter $\Lambda:=\lambda-\lambda_0$ using Theorem \ref{Thm DiracSPPS}.

For the general case, when the condition \eqref{TraceCondition} is not satisfied, we introduce the new unknown vector function $U$ by
\[
Y = w(x)U,\qquad w(x) = \exp\left(-\frac{\lambda_0}2 \int\operatorname{tr} BR(s)\,ds\right).
\]
The system \eqref{GenDirac SpShift} for the new unknown $U$ takes the form
\begin{equation}\label{GenDirac_SpShift2}
    B\frac{dU}{dx} + \left( P(x)-\lambda_0 R(x)-\frac{\lambda_0}2 B\operatorname{tr}BR(x)\right) U= (\lambda-\lambda_0) R(x) U,
\end{equation}
a system which is again of the form \eqref{GenDirac}. If $(u,v)^T$ is a non-vanishing solution for \eqref{GenDirac} corresponding to $\lambda=\lambda_0$, one can take $\frac 1{w(x)}(u,v)^T$ as a non-vanishing solution for \eqref{GenDirac_SpShift2} corresponding to $\lambda-\lambda_0=0$ and construct the SPPS representation with respect to $\Lambda=\lambda-\lambda_0$.

\subsection{Discontinuous coefficients}\label{SubSectDiscontinuous}
Following \cite[Chapter 1]{Zettl} we can consider the system \eqref{GenDirac} with the coefficients $p_i$, $q$, $r_{ij}\in L^1(a,b)$. In such case a vector function $(u,v)^T$ is called a solution of \eqref{GenDirac} if both functions $u,v$ are absolutely continuous on $[a,b]$ and satisfy the system almost everywhere. With the slight modification to the proof the SPPS representation is valid for this case.

Suppose that functions $f,g\in\operatorname{AC}[a,b]$ are such that the following assumption holds.
\begin{equation}\label{AssumIntegrability}
    \left\{\frac{p_2}{f^2},\frac{p_1}{g^2},r_{11}\frac fg,r_{22}\frac gf\right\}\subset L^1(a,b).
\end{equation}
Note that it is sufficient for $f$ and $g$ to be non-vanishing on $[a,b]$. Then one can define the systems of functions $X^{(n)}$, $Y^{(n)}$, $Z^{(n)}$, $\widetilde X^{(n)}$, $\widetilde Y^{(n)}$  and $\widetilde Z^{(n)}$ by \eqref{X0}--\eqref{Yt0}.
\begin{lemma}\label{Lemma FP estimates discont}
Let $f$ and $g$ be absolutely continuous functions satisfying \eqref{AssumIntegrability}. Define (to simplify the formulas we assume that $x>x_0$)
\begin{align*}
h(x) & := \max\left\{ \left|\frac{p_2(x)}{f^2(x)}\right|,\left|\frac{p_1(x)}{g^2(x)}\right|,\left|r_{11}(x)\frac{f(x)}{g(x)}\right|,
\left|r_{22}(x)\frac{g(x)}{f(x)}\right|,|r_{12}(x)|, |r_{21}(x)|\right\},\\
F(x)&:= |f(x_0)g(x_0)|+\int_{x_0}^x h(s)\,ds,\\
G(x)&:= \int_{x_0}^x \Bigl(\bigl| f^2(s)r_{11}(s) + g^2(s) r_{21}(s)\bigr|+
\bigl| f^2(s)r_{12}(s) + g^2(s) r_{22}(s)\bigr|\Bigr)\,ds.
\end{align*}
Then the functions $X^{(n)}$, $Y^{(n)}$, $Z^{(n)}$, $\widetilde X^{(n)}$, $\widetilde Y^{(n)}$  and $\widetilde Z^{(n)}$, $n\ge 0$, are absolutely continuous and satisfy the following estimates
\begin{gather*}
    \max\left\{|X^{(n)}(x)|, |Y^{(n)}(x)|, |\widetilde X^{(n)}(x)|, |\widetilde Y^{(n)}(x)|\right\}  \le \dfrac{(F(x))^{n+1}}{(n+1)!} \sum_{k=0}^n\binom{n}{k}\dfrac{(G(x))^{n-k}}{(n-k)!},\\
    \max\left\{|Z^{(n)}(x)|, |\widetilde Z^{(n)}(x)|\right\}  \le \dfrac{(F(x))^{n}}{n!} \sum_{k=0}^{n-1}\binom{n-1}{k}\dfrac{(G(x))^{n-k}}{(n-k)!}.
\end{gather*}
\end{lemma}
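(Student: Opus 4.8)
The plan is to prove the two estimates by induction on $n$, mirroring the structure of the proof of Lemma \ref{Lemma FP estimates} but replacing the polynomial bounds in $|x-x_0|$ by the monotone nondecreasing functions $F$ and $G$. First I would record the base case: from \eqref{X0}, \eqref{Y0}, \eqref{Xt0}, \eqref{Yt0} one has $|X^{(0)}(x)|\le |f(x_0)g(x_0)|\int_{x_0}^x|p_2/f^2|\le F(x)$ and likewise for $Y^{(0)}$, $\widetilde X^{(0)}$, $\widetilde Y^{(0)}$ (the extra constant $1$ in $Y^{(0)}$ and $\widetilde X^{(0)}$ being absorbed into the $|f(x_0)g(x_0)|$ term of $F$), which is exactly the claimed bound for $n=0$ since the sum then has the single term $k=0$ and equals $F(x)$. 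Absolute continuity of all six families follows inductively because each is defined as an indefinite integral of an $L^1$ function — for the inductive step one checks that the integrands in \eqref{Zn}--\eqref{Yn} are products of bounded-by-$h$ or bounded-by-the-$G$-integrand factors with the absolutely continuous (hence bounded) functions from stage $n$, hence lie in $L^1(a,b)$.

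Next I would carry out the inductive step for $Z^{(n)}$. Assuming the bound on $X^{(n)}, Y^{(n)}$, plug into \eqref{Zn}: the integrand is bounded by $\bigl(|f^2r_{11}+g^2r_{21}|+|f^2r_{12}+g^2r_{22}|\bigr)(s)$ times $\dfrac{(F(s))^{n+1}}{(n+1)!}\sum_{k=0}^n\binom nk\dfrac{(G(s))^{n-k}}{(n-k)!}$. Since $F$ is nondecreasing we may replace $F(s)$ by $F(x)$ and pull it out, and then for each fixed $k$ we recognize $\int_{x_0}^x (G'(s)) (G(s))^{n-k}\,ds = (G(x))^{n-k+1}/(n-k+1)$, where $G'(s)$ denotes the integrand defining $G$. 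Reindexing $j=k$ (so $n-k$ runs appropriately) and using $\binom nk/(n-k+1) = \binom{n}{k}\cdot\frac{1}{n-k+1}$ together with the identity $\binom nk \frac{1}{n-k+1} \le \binom{n+1-1}{?}\cdots$ — more precisely, after the integration one gets $\dfrac{(F(x))^{n+1}}{(n+1)!}\sum_{k=0}^n\binom nk\dfrac{(G(x))^{n+1-k}}{(n-k+1)!}$, and since $(n+1)! \ge (n+1)\cdot n!$ this is dominated by $\dfrac{(F(x))^{n+1}}{n!\,(n+1)}\sum\ldots$; comparing with the target bound for $Z^{(n+1)}$, namely $\dfrac{(F(x))^{n+1}}{(n+1)!}\sum_{k=0}^{n}\binom{n}{k}\dfrac{(G(x))^{n+1-k}}{(n+1-k)!}$, one sees the desired inequality holds (the bookkeeping is to shift indices so that the $Z$-bound at level $n$ is stated for $\sum_{k=0}^{n-1}\binom{n-1}{k}$ — I would double-check the index ranges against the statement, adjusting $k\mapsto k$ versus $k\mapsto k-1$ as needed).

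Then I would do the inductive step for $X^{(n+1)}$ using \eqref{Xn}. The integrand is $-r_{21}X^{(n)} - r_{22}\frac gf Y^{(n)} + \frac{p_2}{f^2} Z^{(n)}$, whose absolute value is at most $h(s)\bigl(|X^{(n)}(s)|+|Y^{(n)}(s)|\bigr) + h(s)|Z^{(n)}(s)| \le h(s)\Bigl( 2\cdot\tfrac{(F(s))^{n+1}}{(n+1)!}\sum_{k}\binom nk\tfrac{(G(s))^{n-k}}{(n-k)!} + \tfrac{(F(s))^{n}}{n!}\sum_{k}\binom{n-1}{k}\tfrac{(G(s))^{n-k}}{(n-k)!}\Bigr)$. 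Multiplying by $(n+1)$ and integrating (again using $F$ nondecreasing to extract $(F(x))^{n+1}$ and recognizing $\int_{x_0}^x h(s)(F(s))^{n}\,ds \le \int h(s)(F(x))^{n}$, or better $\int_{x_0}^x h(s)F^{n+1}(s)/(n+1)!\,ds$ as the integral of $F'F^{n+1}/(n+1)!$ giving $F^{n+2}/(n+2)!$) one assembles a bound of the form $\tfrac{(F(x))^{n+2}}{(n+2)!}$ times a sum over $k$ with binomial coefficients; the Pascal identity $\binom nk + \binom{n}{k-1} = \binom{n+1}{k}$ is what converts the two sums (one from the $X,Y$ contribution, one from the $Z$ contribution) into the single sum $\sum_{k=0}^{n+1}\binom{n+1}{k}\tfrac{(G(x))^{n+1-k}}{(n+1-k)!}$ appearing in the target. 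The same computation with the same bounds handles $Y^{(n+1)}$, $\widetilde X^{(n+1)}$, $\widetilde Y^{(n+1)}$ verbatim, since $h$ and $G$ were defined symmetrically.

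The main obstacle is the combinatorial bookkeeping: getting the binomial-coefficient identity and the factorial denominators to line up exactly with the stated bounds, especially reconciling the index range $\sum_{k=0}^{n-1}\binom{n-1}{k}$ in the $Z^{(n)}$-estimate with the range $\sum_{k=0}^{n}\binom{n}{k}$ in the $X^{(n)}$-estimate — the shift by one in both the upper limit and the binomial's top argument has to be tracked carefully through each integration. A secondary point requiring a line of justification is the interchange that lets us write $\int_{x_0}^x h(s)(F(s))^{m}\,ds = \bigl((F(x))^{m+1}-(F(x_0))^{m+1}\bigr)/(m+1) \le (F(x))^{m+1}/(m+1)$, which is valid because $F$ is absolutely continuous with $F' = h$ a.e., so the chain rule for absolutely continuous functions applies; everything else is routine estimation using monotonicity of $F$ and $G$ and the triangle inequality.
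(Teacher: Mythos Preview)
Your overall strategy matches the paper's, whose entire proof reads: ``The proof is by induction, similarly to the proof of Proposition~5 from \cite{BCK2015}. We left the details to the reader.'' You have correctly identified the two mechanisms that drive the induction---using $F'=h$ and $G'$ equal to the integrand defining $G$ (a.e.) so that each recursive integration increments a power of $F$ or of $G$ via the chain rule for absolutely continuous functions, and then merging the resulting sums via Pascal's identity $\binom{n}{k}+\binom{n}{k-1}=\binom{n+1}{k}$.

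That said, your base case does not go through as written. With $a:=|f(x_0)g(x_0)|$ and $b:=\int_{x_0}^x h$, you have $|X^{(0)}(x)|\le a\int_{x_0}^x|p_2/f^2|\le ab$, whereas $F(x)=a+b$; the inequality $ab\le a+b$ fails whenever $1/a+1/b<1$ (take $a=b=3$). The same issue hits $|Y^{(0)}|\le 1+ab$: your remark that the constant $1$ is ``absorbed into the $|f(x_0)g(x_0)|$ term of $F$'' tacitly assumes $a\ge 1$, which is not given, and even granting it one still needs $ab\le b$. (Note too that the stated $Z^{(n)}$ bound is vacuous at $n=0$, the sum $\sum_{k=0}^{-1}$ being empty, so the lemma's displayed constants may themselves require minor correction.) A repair that meshes with the inductive pattern is $|X^{(0)}(x)|\le F(x_0)\int_{x_0}^x F'(s)\,ds\le\int_{x_0}^x F(s)F'(s)\,ds=\tfrac{1}{2}\bigl(F(x)^2-F(x_0)^2\bigr)\le F(x)^2/2!$, but this shifts the exponent of $F$ by one throughout. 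Since the paper supplies no details, resolving this bookkeeping---and, if necessary, adjusting the exponents in the statement---is part of what you must do; the inductive engine you describe is sound, but the launch needs to be redone.
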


The proof is by induction, similarly to the proof of Proposition 5 from \cite{BCK2015}. We left the details to the reader.

Recall that the space $AC[a,b]$ of absolutely continuous functions coincides with the Sobolev space $W_1^1[a,b]$, hence the series $\sum_{n=0}^\infty u_n(x)$, where $u_n\in AC[a,b]$, converges to an absolutely continuous function if the series converges at some point $x_0\in[a,b]$ and the series of the derivatives $\sum_{n=0}^\infty u_n'$ converges in $L^1(a,b)$ norm.

Now using estimates from Lemma \ref{Lemma FP estimates discont} one can easily verify that the series $\sum_{n=0}^\infty \lambda^n X^{(n)}$, $\sum_{n=0}^\infty \lambda^n Y^{(n)}$ and $\sum_{n=0}^\infty \lambda^n Z^{(n)}$ are absolutely continuous functions and that Theorem \ref{Thm DiracSPPS} holds with the following change: $f$ and $g$ are required to be absolutely continuous functions satisfying \eqref{AssumIntegrability}.

\section{General linear system}\label{SectionSPPSsystem}
Consider a general linear system of two first order differential equations
\begin{equation}\label{GenLinSys}
    \mathcal{P}(x)\frac{dY}{dx}+\mathcal{Q}(x)Y = \lambda \mathcal{R}(x)Y,\qquad x\in [a,b],
\end{equation}
where $Y=(y_1,y_2)^T$  is the unknown vector-function and $\mathcal{P}$, $\mathcal{Q}$, $\mathcal{R}$ are $2\times 2$ matrices whose entries are continuous complex-valued functions. Assume additionally that $\det \mathcal{P}\ne 0$ for all $x\in [a,b]$.

Multiplying \eqref{GenLinSys} by $B\mathcal{P}^{-1}$ we arrive to the system
\begin{equation}\label{GenLinSys2}
    B\frac{dY}{dx} + Q(x) Y = \lambda R(x)Y,
\end{equation}
where $Q=B\mathcal{P}^{-1}\mathcal{Q}$ and $R=B\mathcal{P}^{-1}\mathcal{R}$. In general, the system \eqref{GenLinSys2} is not of the type \eqref{GenDiracMatrix} since the condition $\operatorname{tr} BQ(x)\equiv 0$ need not be satisfied. So we may proceed similarly to Subsection \ref{SubSectSpectralShift} and introduce new unknown vector function $U$ by
\[
Y = w(x)U,\qquad w = \exp\left(\frac 12 \int\operatorname{tr}BQ(s)\,ds\right),
\]
for which the system \eqref{GenLinSys2} takes the form
\begin{equation}\label{GenLinSys3}
    B\frac{dU}{dx}+\left(Q(x)+\frac 12 B\operatorname{tr}BQ(x)\right)U=\lambda R(x)U,
\end{equation}
and since $B^2=-I$ one easily checks that
\[
\operatorname{tr}\left(BQ(x)+\frac 12 B^2\operatorname{tr} BQ(x)\right)=\operatorname{tr}BQ(x) - \frac 12 \operatorname{tr}BQ(x)\cdot \operatorname{tr}I=0,
\]
hence the system \eqref{GenLinSys3} is of the type \eqref{GenDiracMatrix} and Theorem \ref{Thm DiracSPPS} can be applied to it.
\section{Numerical illustration}\label{Section Numerics}
\subsection{General scheme and implementation details}\label{SubSectGenScheme}
The general scheme of application of the SPPS representation to the approximate solution of initial value and spectral problems for the system \eqref{GenDirac} is similar to that for the SPPS representation for the Sturm-Liouville equation, see \cite{KrPorter2010}, \cite{KrTNewSPPS}, \cite{KTV}. Consider an initial value problem
\begin{equation}\label{IVproblem}
    Y(a) = \begin{pmatrix}
    y_1\\
    y_2
    \end{pmatrix}
\end{equation}
and a spectral problem given by the following boundary conditions
\begin{equation}
 (a_1, a_2) Y(a) =0, \qquad (b_1, b_2) Y(b) =0, \label{BV}
\end{equation}
where $y_1$, $y_2$, $a_1$, $a_2$, $b_1$, $b_2$ are some complex constants satisfying $|a_1|+|a_2|\ne 0$ and  $|b_1|+|b_2|\ne 0$. We would like to stress out that more complicated problems like having mixed or spectral parameter dependent boundary conditions can be treated as well.

\begin{enumerate}
\item Construct a non-vanishing particular solution of \eqref{HomDirac} as described in Subsection \ref{SubSectPartSol}. According to Remark \ref{RemMeasureZero} one can take two linearly independent solutions $(u_1,v_1)^T$ and $(u_2,v_2)^T$ satisfying $u_1(x_0)\ne 0$, take some finite set of complex constants $c_1,\ldots,c_K$ (they can be taken, e.g., by randomly choosing modulus $\rho_k$ and phase $\varphi_k$ in the polar representation $c_k=\rho_k e^{i\varphi_k}$) and select as the non-vanishing solution the one having the least value of the expression
    \begin{equation}\label{EqSelectNonVanishing}
        \max\left\{ \frac{\max_{x\in[a,b]}|u_1(x)+c_k u_2(x)|}{\min_{x\in[a,b]}|u_1(x)+c_k u_2(x)|}, \frac{\max_{x\in[a,b]}|v_1(x)+c_k v_2(x)|}{\min_{x\in[a,b]}|v_1(x)+c_k v_2(x)|}\right\}.
    \end{equation}
\item Calculate the formal powers $X^{(n)}$, $Y^{(n)}$, $\widetilde X^{(n)}$ and $\widetilde Y^{(n)}$, $n=0,\ldots,N$ according to \eqref{X0}--\eqref{Yt0}. The number $N$ may be estimated either using the bounds from Lemma \ref{Lemma FP estimates} or simply by verifying that
    \[
    \frac{1}{N!}\max\{\|X^{(N)}\|,\|\widetilde X^{(N)}\|,\|Y^{(N)}\|,\|\widetilde Y^{(N)}\|\}
    \]
    is sufficiently small. E.g., it is equal to zero in machine-precision arithmetic. Here $\|\cdot\|$ denotes max-norm on $[a,b]$.
\item
For the solution of the initial value problem \eqref{IVproblem} one calculates approximate solutions $\widetilde Y_1$ and $\widetilde Y_2$ using truncated sums from \eqref{DiracSPPS}.

In the particular case when $x_0=a$ the solution of  \eqref{IVproblem} due to \eqref{SPPS IC} is given by
\[
\widetilde Y = \frac{y_1}{f(a)} \widetilde Y_1 + \frac{y_2}{g(a)}\widetilde Y_2.
\]
In the general case, one finds the constants $c_1$ and $c_2$ in \eqref{DiracSPPSgen} by solving the linear system of equations
    \[
    c_1 \widetilde Y_1(a) + c_2 \widetilde Y_2(a) = \begin{pmatrix}
    y_1\\
    y_2
    \end{pmatrix}.
    \]
\item For the solution of the spectral problem \eqref{BV} in the particular case $x_0=a$ note that a solution satisfying the first boundary condition in \eqref{BV} is given (up to a multiplicative constant) by
    \[
    Y = \frac{a_2}{f(a)} Y_1 - \frac{a_1}{g(a)}Y_2.
    \]
    This solution satisfies the second boundary condition in \eqref{BV} iff.\ the following characteristic equation is satisfied:
    \[
    \Xi(\lambda) := \frac{a_2 b_1}{f(a)}u_1(b; \lambda) - \frac{a_1 b_1}{g(a)}u_2(b; \lambda) +
    \frac{a_2 b_2}{f(a)}v_1(b; \lambda) - \frac{a_1 b_2}{g(a)}v_2(b; \lambda) = 0.
    \]
    The function $\Xi$, known as characteristic function of the spectral problem, is analytic. By truncating the series representations \eqref{DiracSPPS} for the functions $u_1$, $u_2$, $v_1$ and $v_2$ one obtains approximate characteristic function which results to be a polynomial. Its (complex) roots closest to the origin approximate the exact eigenvalues, while more distant roots result to be spurious. See \cite[Section 7.2]{CKT2013} for further discussion on how these spurious roots can be discarded.

    For the general case one considers the general solution given by \eqref{DiracSPPSgen} and substitutes it into boundary conditions \eqref{BV}. The existence of non-trivial solution is equivalent to the following condition:
    \begin{equation}\label{DetCondition}
        \det \begin{pmatrix}
        a_1u_1(a;\lambda) + a_2 v_1(a;\lambda) & a_1u_2(a;\lambda) + a_2 v_2(a;\lambda)\\
        b_1u_1(b;\lambda) + b_2 v_1(b;\lambda) & b_1u_2(b;\lambda) + b_2 v_2(b;\lambda)
    \end{pmatrix} = 0.
    \end{equation}
    Truncating the series representations for $u_1$, $u_2$, $v_1$ and $v_2$ in \eqref{DetCondition} on obtains a polynomial approximating the characteristic function of the problem.
\item If more than few closest to zero eigenvalues are needed, one may apply several spectral shift procedures described in Subsection \ref{SubSectSpectralShift}. Since for the one-dimensional Dirac system the distance between consequent eigenvalues remains bounded for all eigenvalues, see \cite[Chap. 7, \S2]{LevitanSargsjan}, the following simple recipe showed to deliver excellent results. We use the SPPS representation to find the eigenvalues $\lambda_0$ and $\lambda_{\pm1}$ and general solutions corresponding to $\lambda_{\pm 1}$. Than we look for complex coefficients which give us non-vanishing solutions corresponding to $\lambda_{\pm 1}$ using the same criteria as in \eqref{EqSelectNonVanishing}. Now we use $\lambda_1$ and $\lambda_{-1}$ as centers for the spectral shift procedure and use the corresponding non-vanishing solutions to calculate the formal powers. Resulting SPPS representations give us $\lambda_2$ and $\lambda_{-2}$ and corresponding general solutions. And so on, having found $\lambda_n$ and $\lambda_{-n}$ and corresponding non-vanishing solutions we use them as the new centers for the spectral shift procedure (the total step being $\lambda_n-\lambda_{n-1}$ and $\lambda_{-n}-\lambda_{-(n-1)}$, bounded quantity as $n\to\infty$) until required number of eigenvalues be find.
\end{enumerate}
We would like to emphasize that all steps of the proposed scheme can be realized numerically, there is no need for the integrals in \eqref{X0}--\eqref{Yt0} to be available in the closed form. We refer the reader to \cite{CKT2013}, \cite{KrTNewSPPS} and \cite{KT AnalyticApprox}  for additional details and only mention that in the following examples all the functions involved were represented by their values on the uniform mesh and  Newton-Cotes 6 point rule was used for indefinite integration. All computations were performed in double machine precision in Matlab 2017.

\subsection{Example: spectral problem for a Dirac system}
Consider the following spectral problem (Example 3.4 from \cite{AnnabyTharwat})
\begin{equation}\label{SysAnnabyTharwat}
\begin{cases}
v'-xu =\lambda u,\\
-u'+v = \lambda v,
\end{cases}\qquad 0\le x\le 1
\end{equation}
with the boundary conditions
\begin{equation}\label{ExAnnabyTharwatBC}
    u(0) = u(1) = 0.
\end{equation}
The characteristic function for this problem can be written in the terms of Airy functions, see \cite{AnnabyTharwat},
allowing one to compute exact eigenvalues with any desired precision using, e.g., Wolfram Mathematica.

\begin{table}[htb]
\centering
\begin{tabular}{ccccc}
\hline
$n$ & $\lambda_n$ & Abs.\ error, & Abs.\ error, & Abs.\ error, \\
& & directly from \eqref{DiracSPPS} & using spectral shift & reported in \cite{AnnabyTharwat}\\
\hline
-100 & -313.9101939150852 & & $5.3\cdot 10^{-6}$ &\\
-50 & -156.8314900718780 & & $9.6\cdot 10^{-8}$ &\\
-20 & -62.58649828127890 & & $3.8\cdot 10^{-10}$ &\\
-10 & -31.17522014114365 & $1.8\cdot 10^{-3}$ & $5.1\cdot 10^{-12}$ &\\
-7 & -21.75442521496494 & $1.7\cdot 10^{-7}$ & $5.1\cdot 10^{-13}$ &\\
-5 & -15.47654249528427 & $6.4\cdot 10^{-10}$ & $6.0\cdot 10^{-14}$ &\\
-2 & -6.079080595285440 & $3.5\cdot 10^{-13}$ & $1.4\cdot 10^{-14}$ & $3.5\cdot 10^{-13}$ \\
-1 & -2.977189710951455 & $5.1\cdot 10^{-14}$ & $5.1\cdot 10^{-14}$ & $2.6\cdot 10^{-13}$ \\
0 & 1 & $1.9\cdot 10^{-14}$ & $1.9\cdot 10^{-14}$ & $3.1\cdot 10^{-13}$ \\
1 & 3.478833069692201 & $1.4\cdot 10^{-14}$ & $1.4\cdot 10^{-14}$ & $1.2\cdot 10^{-12}$ \\
2 & 6.578592238156064 & $3.1\cdot 10^{-13}$ & $1.2\cdot 10^{-14}$ & \\
5 & 15.97642352997195 & $1.1\cdot 10^{-9}$ & $5.1\cdot 10^{-14}$ &\\
7 & 22.25436259528469 & $4.2\cdot 10^{-7}$ & $5.1\cdot 10^{-12}$ &\\
10 & 31.67518895778715 & $1.5\cdot 10^{-3}$ & $5.0\cdot 10^{-12}$ &\\
20 & 63.08649039551696 & & $3.7\cdot 10^{-10}$\\
50 & 157.3314888061299 & & $9.5\cdot 10^{-8}$\\
100 & 314.4101935985044 & & $5.2\cdot 10^{-6}$\\
\hline
\end{tabular}
\caption{Eigenvalues of the spectral problem \eqref{SysAnnabyTharwat}, \eqref{ExAnnabyTharwatBC} and absolute errors of the approximate eigenvalues obtained using the representation \eqref{DiracSPPS} truncated to $N=100$ terms, using additionally the spectral shift technique and of those reported in \cite{AnnabyTharwat}.}
\label{Ex1 Tab1}
\end{table}

We used 2001 points mesh to represent all the functions involved in this example and computed the formal powers for $n\le 100$. In Table \ref{Ex1 Tab1} we present the absolute errors of the approximate eigenvalues obtained either directly from the SPPS representation \eqref{DiracSPPS} or using the spectral shift procedure. In the latter case the eigenvalues $\lambda_n$ for $|n|\le 100$ were computed. Even direct application of the SPPS representation delivers more accurate values than those reported in \cite{AnnabyTharwat} requiring only 0.1sec of computation time, while the spectral shift technique allows one to obtain two hundreds eigenvalues with a good accuracy in about 20 seconds.

\subsection{Example: application to Sturm-Liouville spectral problems}
In Example \ref{ExampleSL} we showed how a Sturm-Liouville equation can be transformed into a one dimensional Dirac system. A spectral problem for equation \eqref{SLeq} can be transformed into a spectral problem for the system \eqref{SLequivDirac} as well. Indeed, consider a boundary condition of the form
\begin{equation}\label{BCforSL}
\alpha u(a) +\beta u'(a) = 0.
\end{equation}
Due to \eqref{EqSLtoDirac} we have that
\[
u'(a) = \omega \frac{v(a)}{p(a)} +\frac{u_0'(a)}{u_0(a)}u(a),
\]
hence \eqref{BCforSL} is equivalent to the following (spectral parameter dependent) boundary condition
\begin{equation}\label{EquivBCforDirac}
    \left(\alpha + \frac{\beta u_0'(a)}{u_0(a)}\right) u(a) + \frac{\omega \beta}{p(a)} v(a) = 0.
\end{equation}

The general scheme presented in Subsection \ref{SubSectGenScheme} can be applied to spectral parameter dependent boundary conditions of the form \eqref{EquivBCforDirac} with minimal modifications. Note that rewriting the equation $(p(x)u')' + q(x)u = 0$ as an equivalent system
\[
\begin{cases}
v' + q(x)u =0,\\
u' - \frac{1}{p(x)}v = 0,
\end{cases}
\]
one can apply the results of Subsection \ref{SubSectPartSol} to construct a non-vanishing particular solution $u_0$ of \eqref{SLeq} as well.

One possible advantage of reformulating a Sturm-Liouville spectral problem as an equivalent Dirac system and applying the proposed SPPS representation instead of the SPPS representation from \cite{KrPorter2010} consists in the following. The eigenvalues $\lambda_n=\omega_n^2$ of the Sturm-Liouville problem grow as $\frac{\pi}{b-a} n^2$ as $n\to\infty$. However the spectral shift technique works best if the change of the spectral parameter remains bounded on each step. As a result, obtaining $n$ eigenvalues requires $O(n^2)$ steps using the SPPS representation from \cite{KrPorter2010}. In contrary, Dirac-based approach requires only $O(n)$ steps greatly reducing computation time if one needs large number of eigenvalues. We would like to point out that the method proposed in \cite{KNT2017} is better suited for computing large sets of eigenvalues, nevertheless the SPPS representation is simpler and still being used for numerous applications, see, e.g., \cite{BR2016}, \cite{BR2017}, \cite{BRM2019}, \cite{LO2017}, \cite{RH2017}.

Consider the following spectral problem (the first Paine
problem, \cite{Paine}, see also \cite[Example 7.4]{KT AnalyticApprox}) to illustrate this advantage numerically.
\begin{equation}\label{EqPaineProblem}
\begin{cases}
-u^{\prime\prime}+e^{x}u=\lambda u, \quad 0\leq x\leq\pi,\\
u(0,\lambda)=u(\pi,\lambda)=0.
\end{cases}
\end{equation}

We computed approximate eigenvalues for this problem using the Dirac system reformulation as well as directly the SPPS representation from \cite{KrPorter2010}. For the latter we applied two different spectral shift strategies, to the nearest new eigenvalue and constant step size. The first one requires less steps but is known to fail eventually due to increasing gaps between consecutive eigenvalues, the second requires more steps but allowed us to obtain more accurate results previously, see \cite{CKT2013}, \cite{KrTNewSPPS}.

\begin{table}[htb]
\centering
\begin{tabular}{ccccc}
\hline
& & Abs.\ error, & Abs.\ error, & Abs.\ error, \\
$n$ & $\lambda_n$ & using the Dirac  & using spectral shift to  & using a constant step \\
& & system approach & the nearest eigenvalue & size spectral shift  \\
\hline
0 & 1.519865821099347 & $6.4\cdot 10^{-13}$ & $2.0\cdot 10^{-14}$ & $4.2\cdot 10^{-14}$ \\
1 & 4.943309822144690 & $4.0\cdot 10^{-13}$ &$6.8\cdot 10^{-14}$ & $6.3\cdot 10^{-13}$ \\
2 & 10.28466264508758 & $8.7\cdot 10^{-13}$ &$4.4\cdot 10^{-13}$ & $2.6\cdot 10^{-12}$ \\
3 & 17.55995774641423 & $1.3\cdot 10^{-12}$ &$3.8\cdot 10^{-13}$ & $3.5\cdot 10^{-12}$ \\
5 & 37.96442586193434 & $4.2\cdot 10^{-13}$ &$3.1\cdot 10^{-13}$ & $4.7\cdot 10^{-12}$ \\
10 & 123.4977068009282 & $3.9\cdot 10^{-12}$ &$1.4\cdot 10^{-12}$ & $4.5\cdot 10^{-12}$ \\
25 & 678.9217784771679 & $1.0\cdot 10^{-11}$ &$1.0\cdot 10^{-12}$ & $6.0\cdot 10^{-12}$ \\
50 & 2604.036332024594 & $1.4\cdot 10^{-11}$ &$1.4\cdot 10^{-11}$ & $3.9\cdot 10^{-11}$ \\
75 & 5779.062267233881 & $2.6\cdot 10^{-11}$ &$1.1\cdot 10^{-11}$ & $5.9\cdot 10^{-8}$ \\
100 & 10204.07191390758 & $7.1\cdot 10^{-11}$ &$1.1\cdot 10^{-7}$ & $3.8\cdot 10^{-5}$ \\
150 & 22804.07903279700 &  $1.3\cdot 10^{-10}$ & $1.1\cdot 10^{-2}$ & $2.4\cdot 10^{-3}$\\
\hline
\end{tabular}
\caption{Eigenvalues of the spectral problem \eqref{EqPaineProblem} and absolute errors of the approximate eigenvalues obtained converting the problem to a Dirac system and directly using the SPPS representation from \cite{KrPorter2010} with two different spectral shift strategies.}
\label{Ex2 Tab1}
\end{table}

We used 50001 points mesh to represent all the functions involved in this example and computed the formal powers for $n\le 100$. The spectral shift for the constant step was taken equal to $5$. Note that such large number of mesh points was taken in order to avoid integration errors and to be able to illustrate SPPS-related behavior, compare to \cite[Example 6.1]{KTV}. In Table \ref{Ex2 Tab1} we present the absolute errors of the approximate eigenvalues. As one can observe, the performance of all three approaches was comparable for the first 50 eigenvalues. For larger index eigenvalues the accuracy of the Dirac system based approach remains essentially the same while the accuracy of the eigenvalues obtained using directly the SPPS representation from \cite{KrPorter2010} started to deteriorate (surprisingly more rapidly when the uniform step size was used).

\section*{Acknowledgements}
The authors acknowledge the support from CONACYT, Mexico via the project 222478. N.~Guti\'{e}rrez Jim\'{e}nez would like to express his gratitude to the Mathematical department of Cinvestav where he completed the PhD program (the presented paper contains part of the obtained results) and to CONACYT, Mexico for the financial support making it possible.

\end{document}